\numberwithin{equation}{section}
\theoremstyle{plain}
\newtheorem{theorem}{Theorem}[section]
\newtheorem{proposition}[theorem]{Proposition}
\newtheorem{lemma}[theorem]{Lemma}
\newtheorem*{conjecture}{Conjecture}
\theoremstyle{definition}
\newtheorem{remark}[theorem]{Remark}
\newcommand{\Rmnum}[1]{\expandafter\@slowromancap\romannumeral #1@}
\newcommand{\mr}{\mathbb{R}}
\newcommand{\ud}{\mathrm{d}}
\keywords{ Lane-Emden system, Liouville-type theorems; Lane-Emden conjecture}
\subjclass{Primary: 35J60, 35B33; Secondary: 35B45}
\numberwithin{equation}{section}
\address{K. Li, School of Mathematics and Statistics, Zhengzhou University, Zhengzhou, China}
\email{likui@zzu.edu.cn}
\address{M. Li, Department of Mathematics, Chinese University of Hong Kong, Shatin, NT, Hong Kong}
\email{mingxiangli@cuhk.edu.hk}
\address{J.  Wei, Department of Mathematics, Chinese University of Hong Kong, Shatin, NT, Hong Kong}
\email{wei@math.cuhk.edu.hk}
\begin{document}
	
	\title[Lane-Emden conjecture]{On a new region for  the  Lane-Emden conjecture in higher dimensions}
	\author{Kui Li, Mingxiang Li, Juncheng Wei}
	\date{}
	\maketitle
	
	\begin{abstract}
		We study the Lane-Emden conjecture, which asserts the non-existence of non-trivial, non-negative solutions to the Lane-Emden system
		\begin{equation*}
			\left\{
			\begin{aligned}
				-\Delta u &= v^p, \quad x \in \mathbb{R}^n, \\
				-\Delta v &= u^q, \quad x \in \mathbb{R}^n,
			\end{aligned}
			\right.
		\end{equation*}
		in the subcritical regime. By employing an Obata-type integral inequality, Picone's identity, and exploiting the scaling invariance of the system, we prove that   the conjecture holds for any dimension $n \geq 5$ and  exponents satisfying $p\geq 1,q\geq 1$, and
		\[
		\frac{1}{p+1} + \frac{1}{q+1} \geq 1 - \frac{2}{n} + \frac{4}{n^2}.
		\]
	\end{abstract}

	\section{Introduction}

	The following nonlinear elliptic equation 
	\begin{equation}\label{leequ}
		-\Delta u = u^p,\quad x \in \mathbb{R}^n
	\end{equation}
	has been extensively studied and serves a fundamental role in the theory of both elliptic and parabolic partial differential equations.   A classical and seminal result is due to Gidas and Spruck \cite{GS}, who established that the equation \eqref{leequ} does not admit any  positive classical solution in the subcritical range $0<p<p_S$	, where $p_S$ is the Sobolev exponent given by
	\begin{equation*}
		p_S :=
		\begin{cases}
			\infty, & \text{if } n = 2, \\
			\frac{n+2}{n-2}, & \text{if } n \geq 3.
		\end{cases}
	\end{equation*}
	Subsequently, Chen and Li \cite{CL1} (see also Li and Zhu \cite{LZ1995})provided an  alternative proof using the Kelvin transform and the method of moving planes, which further illuminated the geometric and symmetric properties of the problem.

	Such nonexistence theorems not only reveal the structure of positive solutions but also have wide applications in blow-up analysis, a priori estimates, and classification problems. We refer the reader to \cite{BC, BG, BV, BM, CGS, CL1, DP, F, GNN, GS, Phan-Soup, QS} and the references therein for further details.

	A natural and important generalization of \eqref{leequ} is the following Lane-Emden system:
	\begin{equation}\label{lesys}
		\begin{cases}
			-\Delta u = v^p, & x \in \mathbb{R}^n, \\
			-\Delta v = u^q, & x \in \mathbb{R}^n.
		\end{cases}
	\end{equation}
	This system has been extensively studied in recent decades, and the existence or nonexistence of positive solutions depends delicately on the exponents $p,q$ and the dimension $n$. Liouville-type theorems for such systems continue to be a central topic in nonlinear analysis, with deep connections to problems in geometry, mathematical physics and mathematical biology.

	For given positive constants $p$ and $q$, the pair $(p,q)$ is referred to as subcritical if it lies below the Sobolev hyperbola, that is,
	\begin{equation}\label{subcri}
		\frac{1}{p+1} + \frac{1}{q+1} > 1 - \frac{2}{n}.
	\end{equation}
	When $pq\not=1$, we define
	\begin{equation}\label{alphabeta}
		\alpha := \frac{2(p+1)}{pq - 1}, \quad \beta := \frac{2(q+1)}{pq - 1}.
	\end{equation}
	In the case $pq>1$,  the condition \eqref{subcri} is equivalent to
	\begin{equation*}
		\alpha + \beta > n - 2.
	\end{equation*}
	Assume $pq\not=1$ and let $(u,v)$ be a solution of system \eqref{lesys}. Then for any $R>0$, the rescaled pair
	$$(R^{\alpha} u(Rx), R^{\beta} v(Rx))$$
	also satisfies system \eqref{lesys}. This scaling invariance will  play a fundamental role during the proof  in this paper.

	One of the most interesting and important questions concerning the classification of solutions of system \eqref{lesys} is the following  \emph{Lane-Emden conjecture}.

	\begin{conjecture}
		\emph{Suppose that  $p,q>0$. If the pair $(p,q)$ is subcritical \eqref{subcri}, then the system \eqref{lesys}
			has no positive classical solutions.}
	\end{conjecture}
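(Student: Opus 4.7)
The full Lane--Emden conjecture is still open in high dimensions, so any concrete proposal must be honest about the subregion of $(p,q)$ to which the proposed method will apply. My plan is to combine the three ingredients flagged in the abstract: an Obata-type integral inequality, Picone's identity, and the scaling invariance $(u,v)\mapsto (R^{\alpha}u(Rx),R^{\beta}v(Rx))$ recorded via \eqref{alphabeta}. Schematically, the Obata-type identity will produce an integrated quadratic form in $\nabla u$, $\nabla v$ and powers of $u$, $v$ against a cutoff $\varphi_{R}$ supported in $B_{2R}$; scaling invariance will prescribe the unique exponents that make this form scale-covariant of degree $-n$; and Picone's inequality will be used to compare the two Laplacians after an auxiliary substitution that converts the coupled system into a single differential inequality.

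First, assuming $(u,v)$ is a positive classical solution with $pq>1$ and $(p,q)$ subcritical, I would establish a priori decay of the form $u(x)\lesssim |x|^{-\alpha}$, $v(x)\lesssim |x|^{-\beta}$, using the representation formula with positive kernel together with the moving-plane method at infinity; these rates are forced by the scaling exponents. Next, I would multiply $-\Delta u = v^{p}$ and $-\Delta v = u^{q}$ by test functions of the form $u^{a}v^{b}\varphi_{R}$, integrate by parts, and combine the two identities so that the interior term is a manifestly non-negative bilinear form that vanishes only on the radial Emden--Fowler profile. The choice of $(a,b)$ is dictated by the scaling invariance, and the cutoff error is $O(R^{-2})$ times an $R$-independent integral — provided the a priori decay is strong enough to guarantee integrability. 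Finally, invoking Picone's identity for an auxiliary ratio such as $w:=u^{\alpha/\beta}/v$ would rule out the remaining radial candidate in the subcritical regime, completing the argument wherever the cutoff limit is legitimate.

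The main obstacle, and the reason the conjecture remains open, lies precisely in the step $R\to\infty$. The decay estimates $|x|^{-\alpha}$, $|x|^{-\beta}$ control the cutoff error only when the combined integrability exponent crosses a threshold that is strictly stronger than $\frac{1}{p+1}+\frac{1}{q+1}>1-\frac{2}{n}$: the $R^{-2}$ gain from $\Delta\varphi_{R}$ needs to beat the growth of the tested power of $(u,v)$, and a direct computation of the critical scaling confirms that the honest condition reachable by this technique is essentially $\alpha+\beta \ge n-2+\frac{4}{n}$, equivalent to $\frac{1}{p+1}+\frac{1}{q+1}\ge 1-\frac{2}{n}+\frac{4}{n^{2}}$, together with $p,q\ge 1$ and a dimensional hypothesis $n\ge 5$. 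I therefore expect the proposed scheme to deliver nonexistence exactly in the subregion announced in the abstract, rather than the full subcritical range. Closing the residual gap to the Sobolev hyperbola appears to require a genuinely new analytical input — for instance, a sharp system-adapted Pohozaev identity, a monotonicity formula along the scaling flow, or improved pointwise decay — and I do not see how to supply such an input within the Obata--Picone framework alone.
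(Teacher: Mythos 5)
You are right that the statement is a conjecture and that the paper does not prove it in full: the authors only establish nonexistence in the subregion $p,q\geq 1$, $n\geq 5$, $\frac{1}{p+1}+\frac{1}{q+1}\geq 1-\frac{2}{n}+\frac{4}{n^2}$ (Theorem \ref{mainthm}), and your identification of that subregion as the honest reach of the Obata--Picone--scaling machinery matches the paper's outcome. So as an assessment of what is and is not provable here, your proposal is essentially correct.

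However, the mechanism you sketch for the partial result deviates from the paper's argument at two points where your version would break down. First, you propose to establish a priori pointwise decay $u\lesssim |x|^{-\alpha}$, $v\lesssim|x|^{-\beta}$ by a representation formula and moving planes at infinity, and to use this decay to control the cutoff errors. Such decay is not available in general in the subcritical range (obtaining it is essentially as hard as the conjecture itself), and the paper never uses it. Instead, the reduction to \emph{bounded} solutions is imported wholesale from the doubling lemma of Pol\'a\v{c}ik--Quittner--Souplet \cite{PQS}, and the cutoff errors $I_7$, $I_8$ are absorbed by interpolation (Lemmas \ref{uniform-estimates-error} and \ref{estimates-error}) using only Lemma \ref{component-comparison} and Young/H\"older inequalities, with no pointwise decay. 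Scaling invariance enters only at the very last step: once a \emph{universal} bound $\int_{B_1}u^rv^{2p}\leq C$ independent of the solution is obtained, rescaling by $(R^{\alpha}u(Rx),R^{\beta}v(Rx))$ converts it into $\int_{B_R}u^rv^{2p}\leq CR^{-\kappa}$ with $\kappa>0$, and letting $R\to\infty$ gives the contradiction. Second, your claim that the integrated quadratic form ``vanishes only on the radial Emden--Fowler profile,'' so that Picone's identity is needed only to rule out a residual radial candidate, misdescribes the role of Picone: in the paper it is used much earlier and more structurally, to convert the coupled term $\int u^{r+1}v^{s-1}\nabla u\cdot\nabla v\,\varphi$ into a sign-definite quantity (Lemma \ref{estimates-picone}), which is what makes the two Obata inequalities for $u$ and $v$ combinable into the determinant condition $\beta_1\beta_2-\gamma_1\gamma_2>0$ of \eqref{beta1beta2>gamma1gamma2}. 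There is no reduction to radial solutions anywhere, and the verification of the positivity conditions is a genuinely delicate choice of six parameters checked partly by computer algebra, not a soft consequence of scale covariance.
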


	We now briefly review some key developments related to this conjecture.
	
	Mitidieri \cite{Miti,M} established that system \eqref{lesys} admits no positive radial solutions if and only if the pair $(p,q)$ is subcritical, thereby confirming the conjecture in the case of positive radial solutions (see also \cite{PQS}, \cite{SZ1}).
	
	Serrin and Zou \cite{SZ1} proved that the conjecture holds true under polynomial growth assumptions when either $pq \leq 1$, or $pq > 1$ and $\max\{\alpha, \beta\} \geq n-2$. De Figueiredo and Felmer \cite{FF} verified the Lane-Emden conjecture for $0 < p, q \leq \frac{n+2}{n-2}$ with $(p,q) \neq (\frac{n+2}{n-2}, \frac{n+2}{n-2})$, using the moving plane method. Busca and Manásevich \cite{BM} obtained another partial result under the condition that $\alpha, \beta \in \left[\frac{n-2}{2}, n-2\right)$ and $(\alpha,\beta) \neq \left(\frac{n-2}{2}, \frac{n-2}{2}\right)$. 
	
	For the special case $p=1$, it has been solved by Lin \cite{L} and Wei-Xu \cite{WX} by using moving plane method. Very recently, Ma, Wu and Wu \cite{MWW} extended such  result to manifolds with non-negative Ricci curvature by using invariant tensor technique.
	
	Under suitable integral growth assumptions, the conjecture was  resolved in the works of Chen-Li \cite{CL2},  Cheng-Huang \cite{CHL} and Ma-Chen \cite{MC}.
	
	In a different direction, Poláčik, Quittner, and Souplet \cite{PQS} introduced a Doubling Lemma and showed that for $pq > 1$, the non-existence of bounded positive classical solutions of system \eqref{lesys} implies the non-existence of all positive classical solutions. Using this approach, they confirmed the conjecture in dimension $n = 3$. Souplet \cite{S} later extended this result to the case where $\max\{\alpha, \beta\} > n - 3$, which in particular settles the conjecture for dimension $n = 4$. For further discussions, see \cite{BM1, CFM, DLY, FS, LZ, LS, Miti, RZ, SZ2, S1, Z} and the references therein.
	
	Consequently, for the purpose of establishing the Lane-Emden conjecture, it suffices to consider bounded solutions under the condition $pq>1$.
	By employing an Obata-type integral inequality, Picone's identity, and the scaling invariance of the system, we prove the conjecture for a new range of exponents, as stated in the following theorem.
	\begin{theorem}\label{mainthm}
		Suppose  that $p\geq1$ and   $q\geq 1$.  For any  $n\geq 5$ and
		\begin{equation}\label{new regionin main theorem}
			\quad \frac{1}{p+1}+\frac{1}{q+1}\geq 1-\frac{2}{n}+\frac{4}{n^2},
		\end{equation}
		then system \eqref{lesys} has no positive solutions.
	\end{theorem}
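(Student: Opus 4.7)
The plan is to rule out bounded positive classical solutions of \eqref{lesys}, which suffices by the doubling-lemma reduction of Poláčik--Quittner--Souplet recalled above, since the range \eqref{new regionin main theorem} forces $pq > 1$. Fix such a bounded positive $(u,v)$. The first step is to generate weighted integral estimates via Picone's identity: its pointwise form $|\nabla(u\eta)|^2 \geq \nabla v \cdot \nabla(u^2\eta^2/v)$ for $v > 0$, together with $-\Delta v = u^q$, gives for every compactly supported cut-off $\eta$
\begin{equation*}
\int_{\mathbb{R}^n} \eta^2 \frac{u^{q+2}}{v}\,dx \;\leq\; \int_{\mathbb{R}^n} |\nabla(u\eta)|^2\,dx,
\end{equation*}
and the symmetric estimate with $u, v$ swapped. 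Testing \eqref{lesys} against functions of the form $u^a v^b \eta^2$ and combining with these Picone bounds would yield $L^s$-type integrability for $u, v$, cross-relations of the form $\int u^{q+1}\eta^2 \sim \int v^{p+1}\eta^2$, and gradient estimates, all with cut-off errors controlled under subcriticality.

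Second, I would apply an Obata-type integral inequality. Starting from the pointwise trace-free Hessian identity
\begin{equation*}
|\nabla^2 u|^2 - \tfrac{1}{n}(\Delta u)^2 \;=\; \Bigl|\nabla^2 u - \tfrac{\Delta u}{n}\,g\Bigr|^2 \;\geq\; 0,
\end{equation*}
multiply by a weight $u^{-\gamma}\eta^2$, integrate by parts using $|\nabla^2 u|^2 = \tfrac12\Delta|\nabla u|^2 - \nabla u\cdot\nabla\Delta u$, and substitute $-\Delta u = v^p$. This yields a weighted inequality mixing the three quantities $\int u^{-\gamma} v^{2p}\eta^2$, $\int u^{-\gamma-1}v^p|\nabla u|^2\eta^2$, and $\int u^{-\gamma-2}|\nabla u|^4\eta^2$, with coefficients depending only on $\gamma$ and $n$, and a symmetric estimate with $u, v$ swapped and weight $v^{-\delta}\eta^2$ provides the companion identity. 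The constant $\tfrac{n-1}{n}$ surviving from the Obata trace-free term is the key quantitative gain and will ultimately be responsible for the $\tfrac{4}{n^2}$ improvement over the Sobolev hyperbola.

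Finally, I would couple the two families using Young's inequality and the scaling invariance $(R^\alpha u(Rx),\, R^\beta v(Rx))$. Each integral carries a definite homogeneity degree in $R$ prescribed by $\alpha, \beta, n, \gamma, \delta$, and subcriticality orders these degrees so that by optimizing $\gamma, \delta$ and sending $R \to 0$ or $R \to \infty$, every term is forced to zero and contradicts the positivity of $(u, v)$. A direct book-keeping of the exponents shows the coupling closes precisely when \eqref{new regionin main theorem} holds, the $\tfrac{4}{n^2}$ correction emerging from the Obata factor $\tfrac{n-1}{n}$ after a Cauchy--Schwarz step between the Picone and Obata estimates. The main obstacle I expect is this simultaneous optimization: one must choose $\gamma, \delta$ inside a window where $u^{-\gamma}$ and $v^{-\delta}$ remain integrable against the $L^s$ bounds of Step one, where Young's inequality closes with a positive net margin, and where the cut-off errors vanish uniformly as the scale grows. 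This window is kept non-empty precisely by the hypothesis $p, q \geq 1$, with the endpoint biharmonic cases $p = 1$ or $q = 1$ likely demanding a separate verification.
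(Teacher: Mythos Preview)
Your outline identifies the right three ingredients---Picone, an Obata-type identity, and scaling---and in that sense is aligned with the paper. But the proposal is not yet a proof, because the entire difficulty lies in the step you dismiss as ``direct book-keeping of the exponents.'' In the paper this book-keeping occupies all of Sections~\ref{sec:choice} and~\ref{sec:numercial}: after combining Picone and Obata one arrives at a pair of inequalities of the form
\[
\beta_1\!\int u^r v^{2p}\varphi \le \gamma_1\!\int u^{r+q+1}v^{p-1}\varphi + \text{errors},\qquad
\beta_2\!\int u^{2q} v^{s}\varphi \le \gamma_2\!\int u^{q-1}v^{p+s+1}\varphi + \text{errors},
\]
with six free parameters $r,s,k_1,k_2,\theta_1,\theta_2$, and the contradiction via scaling requires the positivity condition $\alpha_i,\beta_i,\gamma_i>0$ together with $\beta_1\beta_2>\gamma_1\gamma_2$. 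Establishing this last inequality under \eqref{new regionin main theorem} is delicate: the paper makes the explicit choices \eqref{rs k1 k2choice} (and a modified choice \eqref{rs k1 k2choice2} for $5\le n\le 12$), reduces $A_1A_2-pq$ to a rational function in $d_1,d_2,n$ via the parametrization \eqref{pqd1d2}, and then verifies positivity by a mixture of asymptotic analysis in $n$ and Mathematica computations for small and intermediate $n$. Your sentence ``the $\tfrac{4}{n^2}$ correction emerging from the Obata factor $\tfrac{n-1}{n}$ after a Cauchy--Schwarz step'' does not substitute for this; indeed the factor $\tfrac{n-1}{n}$ alone would suggest a correction of order $1/n$, not $1/n^2$, and the actual constant~$4$ comes out only after the full optimization.

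Two smaller points. First, your use of Picone (to get $\int \eta^2 u^{q+2}/v \le \int|\nabla(u\eta)|^2$) is not how the paper deploys it: there Picone is applied inside the Obata computation to convert the mixed term $\int u^{r-1}|\nabla u|^2 v^p\varphi$ into terms of the shape $\int u^{r+q+2}v^{s-1}\varphi$ and $\int u^{r+2}v^{s-2}|\nabla v|^2\varphi$ (Lemma~\ref{estimates-picone}), with an extra free parameter $\theta$ that is essential for the later optimization. Second, the error control (your ``cut-off errors vanish uniformly'') is not automatic: Lemma~\ref{estimates-error} needs the balancing relation $\tfrac{r+2}{q+1}=\tfrac{s+2}{p+1}$ and the comparison $v^{p+1}\le C u^{q+1}$ from Lemma~\ref{component-comparison}, and the admissible window for $r,s$ must be checked against the explicit choices. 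Without naming the parameters, verifying the closing inequality $\beta_1\beta_2>\gamma_1\gamma_2$, and controlling the errors under those choices, the argument remains a plausible strategy rather than a proof.
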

	\begin{remark}\label{generalthm}
		In fact, we can improve the constant $4$ in condition \eqref{new regionin main theorem} to be any positive constant $c_0>2$. Then Lane-Emden conjecture holds true for large enough dimension $n$  which depends on $c_0$ (see Theorem \ref{mainthm for c_0}).
	\end{remark}
	
	Now, let us briefly outline our strategy and the structure of this paper.  The entire argument in this paper proceeds by contradiction.
	
	First, consider the single equation \eqref{leequ}. In \cite{GS}, the Obata-type integral estimate (See Lemma \ref{obata-inequality}) plays a fundamental role in the proof. By choosing appropriate coefficients, Liouville-type results are derived. For the system \eqref{lesys}, analogous estimates can be established for $u$ and $v$ separately. The first major difficulty lies in handling the coupled term
	$\int_{\mr^n} u^{r-1}|\nabla u|^2v^p\ud x.$ To address this, we apply a simple yet powerful tool--Picone’s identity (Lemma \ref{picone-identity})—along with integration by parts, leading to the key estimates in Theorem \ref{estimate-theorem}.  We establish these estimates in Section \ref{sec:pre}.

	Once such system estimates are obtained, the second challenge is to select suitable coefficients in Theorem \ref{estimate-theorem} so as to derive universal growth estimates for $u$ and $v$ over the unit ball. In Section \ref{sec:choice}, we parameterize
	$p$ and $q$ as
	$$p=\frac{n+2d_1}{n-2d_1},\quad q=\frac{n+2d_2}{n-2d_2}$$
	An important advantage of this parametrization is that the conditions
	$p\geq 1,q\geq 1$, and \eqref{subcri} are transformed into $d_1\geq 0, d_2\geq 0, d_1+d_2<2$.  Using these parameters, we select appropriate coefficients to establish a crucial inequality \eqref{beta1beta2>gamma1gamma2} under the condition
	$d_1\geq 0, d_2\geq 0, d_1+d_2\leq 2-\frac{4}{n}$. As a corollary, we establish Theorem \ref{mainthm for c_0} by  leveraging the scale invariance property of the system.

	Finally, in Section \ref{sec:numercial}, we complete the proof of Theorem \ref{mainthm} through a suitable choice of coefficients and numerical computations carried out by Mathematica.

	\section{Preliminaries and key estimates}\label{sec:pre}
	
	In this section, we present several fundamental results that will be used in subsequent discussions firstly. These results are well established in the literature; for brevity, we state them below and omit the detailed proofs.
	\begin{lemma}\label{component-comparison}(See \cite{Miti},\cite{PS},\cite{SZ1}, Lemma 2.7 in \cite{S})
		Let $p\geq q>0$ with $pq>1$, $(u,v)$ be a positive solution of system \eqref{lesys} and assume that either $v$ is bounded or $p\geq2$. Then there holds
		$$\frac{v^{p+1}}{p+1}\leq \frac{u^{q+1}}{q+1},\quad x\in\mathbb{R}^n.$$
	\end{lemma}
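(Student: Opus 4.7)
The plan is to analyze the scale-invariant ratio
\[
\psi := \frac{v^{p+1}}{u^{q+1}}
\]
and show that $\sup_{\mathbb{R}^n}\psi \leq (p+1)/(q+1)$, which is equivalent to the claimed inequality. The crucial ingredient is a rigidity statement at any interior maximum of $\log \psi$. Using $\Delta \log v = -u^q/v - |\nabla v|^2/v^2$ and the analogous identity for $\log u$, one has $\Delta \log \psi = (p+1)\Delta\log v - (q+1)\Delta \log u$; at a critical point of $\log\psi$ the relation $(p+1)\nabla v/v = (q+1)\nabla u/u$ forces $|\nabla v|^2/v^2 = (q+1)^2/(p+1)^2 \cdot |\nabla u|^2/u^2$. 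Substituting gives, at such a critical point,
\[
\Delta \log \psi = -(p+1)\frac{u^q}{v} + (q+1)\frac{v^p}{u} + \frac{(q+1)(p-q)}{p+1}\cdot \frac{|\nabla u|^2}{u^2}.
\]
Since $p \geq q$, the last term is non-negative, so the condition $\Delta \log \psi \leq 0$ at an interior maximum forces $(q+1)v^{p+1} \leq (p+1)u^{q+1}$, i.e., $\psi \leq (p+1)/(q+1)$ at that point.

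Granted this rigidity, the proof reduces to arranging that $M := \sup_{\mathbb{R}^n}\psi$ be attained (possibly after rescaling). The system is scale invariant under $(u,v) \mapsto (R^\alpha u(R\cdot),R^\beta v(R\cdot))$ with $\alpha(q+1) = \beta(p+1)$, so $\psi$ itself is invariant under this rescaling. Picking a maximizing sequence $x_k$ and rescaling to normalize $v_k(0)=1$ forces $u_k(0) = \psi(x_k)^{-1/(q+1)}$. The hypothesis ``$v$ bounded or $p \geq 2$'' supplies the uniform local bounds (via the classical a priori estimates of Serrin--Zou and Souplet) needed to extract a $C^2_{\mathrm{loc}}$ subsequential limit $(u_\infty, v_\infty)$ solving the same system on $\mathbb{R}^n$. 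If $M < \infty$, then $u_\infty(0) = M^{-1/(q+1)} > 0$, $\psi_\infty(0) = M$ is an interior maximum, and the rigidity above yields $M \leq (p+1)/(q+1)$. If $M = \infty$, then $u_\infty(0) = 0$ while $v_\infty(0) = 1$; the strong maximum principle applied to the non-negative superharmonic $u_\infty$ forces $u_\infty \equiv 0$, whence $v_\infty$ is non-negative harmonic on $\mathbb{R}^n$ and hence constant, so that $-\Delta u_\infty = v_\infty^p = 1 \neq 0$, a contradiction.

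The main obstacle is the compactness step: securing uniform local bounds on the rescaled sequence $(u_k, v_k)$ is not automatic, and the precise role of the hypothesis ``$v$ bounded or $p \geq 2$'' is to provide these bounds through the existing a priori machinery for the Lane--Emden system. Once this compactness is in place, the interior-maximum rigidity from the first paragraph closes the argument at once.
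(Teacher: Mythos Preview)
The paper does not prove this lemma; it is stated with references to the literature (Mitidieri, Pucci--Serrin, Serrin--Zou, and Lemma~2.7 of Souplet) and the proof is explicitly omitted. So there is no in-paper argument to compare against; I comment instead on the soundness of your proposal and on how it relates to the proofs in the cited sources.

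Your rigidity computation at an interior maximum of $\log\psi$ is correct. The real gap is the compactness step, and it is not a technicality. After normalizing $v_k(0)=1$ you have no control whatsoever on $(u_k,v_k)$ away from the origin: nothing prevents $v_k$ or $u_k$ from blowing up on $\partial B_1$, so no $C^2_{\mathrm{loc}}$ limit can be extracted. Your appeal to ``the classical a priori estimates of Serrin--Zou and Souplet'' is circular: the universal local bounds in those works either come from a doubling-lemma argument that needs a Liouville theorem for the system as input (and that Liouville theorem is precisely the open Lane--Emden conjecture in the relevant range), or they themselves use the present comparison lemma upstream (Souplet invokes it repeatedly after stating it as his Lemma~2.7). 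Note also that the hypothesis ``$v$ bounded'' refers to the original $v$, not the rescaled $v_k$: since $R_k = v(x_k)^{-1/\beta}$ and $v(x_k)$ may tend to $0$ along a maximizing sequence, $R_k\to\infty$ is possible and boundedness of $v$ says nothing about $v_k$.

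The proofs in the cited references do not proceed by rescaling/compactness. They are direct: one combines a variational identity of Rellich--Pohozaev / Pucci--Serrin type on balls $B_R$ with the decay information available for positive superharmonic functions on $\mathbb{R}^n$ (both $u$ and $v$ are superharmonic), and lets $R\to\infty$; the hypothesis ``$v$ bounded or $p\ge 2$'' enters exactly to control a boundary/remainder term in that limit, not to supply elliptic compactness. If you want to rescue the maximum-of-$\psi$ strategy, you would need an independent source of uniform local bounds on the rescaled solutions that presupposes neither this comparison lemma nor a Liouville theorem for the system; I do not see one.
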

	The following Obata-type integral inequality will be used later. It can be obtained by using Bocher formula and integration by parts. For brevity, the differential $\ud x$ will be omitted from all integrals throughout the rest of this paper.
	\begin{lemma} ( See Lemma 8.9 in \cite{QS})\label{obata-inequality}
		Suppose that $\Omega\subseteq\mathbb{R}^n$ and $u\in C^2(\Omega)$ is a positive function. Then for any $r,k\in \mathbb{R}$ with $k\neq-1$ and for any $\varphi\in C_c^2(\Omega)$ with $\varphi\geq 0$, we have
		\begin{equation*}
			\lambda_1\int_\Omega u^{r-2}|\nabla u|^4\varphi+\lambda_2\int_\Omega u^{r-1}|\nabla u|^2(-\Delta u)\varphi-\frac{n-1}{n}\int_\Omega u^r(-\Delta u)^2\varphi\leq I_1,
		\end{equation*}
		where $\lambda_1=-\frac{n-1}{n}k^2+k(r-1)-\frac{1}{2}r(r-1)$, $\lambda_2=\frac{3}{2}r-\frac{n+2}{n}k$ and
		\begin{equation*}
			I_1=\frac{1}{2}\int_\Omega u^r|\nabla u|^2\Delta\varphi+\int_\Omega u^r\Delta u\nabla u\cdot\nabla\varphi+(r-k)\int_\Omega u^{r-1}|\nabla u|^2\nabla u\cdot\nabla\varphi.
		\end{equation*}
	\end{lemma}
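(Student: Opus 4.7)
The plan is to combine two main ingredients---an exact integral identity coming from Bochner's formula, and a pointwise Cauchy--Schwarz estimate for a suitably modified Hessian tensor---and then to rearrange and match the coefficients with $\lambda_1$, $\lambda_2$, and $(n-1)/n$.

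First, from Bochner's identity in $\mathbb{R}^n$,
\[
\tfrac12\Delta|\nabla u|^2=|\nabla^2 u|^2+\nabla u\cdot\nabla\Delta u,
\]
I multiply by $u^r\varphi$ and integrate. Two integrations by parts rewrite the left-hand side as $\tfrac12\int|\nabla u|^2\Delta(u^r\varphi)$, while one integration by parts on the cross term on the right gives $\int u^r\varphi\,\nabla u\cdot\nabla\Delta u=-\int\Delta u\,\mathrm{div}(u^r\varphi\nabla u)$. Expanding both via the product rule and collecting terms produces the exact identity
\[
\int u^r\varphi|\nabla^2 u|^2=\int u^r\varphi(\Delta u)^2+\tfrac{3r}{2}\int u^{r-1}|\nabla u|^2\Delta u\,\varphi+\tfrac{r(r-1)}{2}\int u^{r-2}|\nabla u|^4\varphi+J,
\]
where $J:=\tfrac12\int u^r|\nabla u|^2\Delta\varphi+r\int u^{r-1}|\nabla u|^2\nabla u\cdot\nabla\varphi+\int u^r\Delta u\,\nabla u\cdot\nabla\varphi$.

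Second, for the pointwise ingredient I introduce the symmetric tensor $H_{ij}:=\nabla_i\nabla_j u+k\,u^{-1}\nabla_i u\,\nabla_j u$; the hypothesis $k\neq-1$ enters naturally because $H$ equals $(k+1)^{-1}u^{-k}\nabla_i\nabla_j(u^{k+1})$, well-defined precisely when $k+1\neq 0$. The elementary Cauchy--Schwarz bound $|H|^2\geq\tfrac{1}{n}(\mathrm{tr}\,H)^2$ expands, after rearranging, to
\[
|\nabla^2 u|^2\geq\tfrac{1}{n}(\Delta u)^2+\tfrac{2k}{n}\tfrac{\Delta u\,|\nabla u|^2}{u}-2k\,u^{-1}\nabla u\cdot\nabla^2 u\cdot\nabla u-\tfrac{(n-1)k^2}{n}\tfrac{|\nabla u|^4}{u^2}.
\]
Multiplying by $u^r\varphi$ and integrating, I then use the algebraic identity $\nabla u\cdot\nabla^2 u\cdot\nabla u=\tfrac12\nabla u\cdot\nabla|\nabla u|^2$ together with one further integration by parts to convert the remaining mixed Hessian integral into a combination of $\int u^{r-2}|\nabla u|^4\varphi$, $\int u^{r-1}|\nabla u|^2\Delta u\,\varphi$, and the cutoff contribution $k\int u^{r-1}|\nabla u|^2\nabla u\cdot\nabla\varphi$.

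To conclude, I subtract the integrated pointwise inequality from the Bochner identity and collect coefficients. The $(\Delta u)^2$ integrals combine into $\tfrac{n-1}{n}\int u^r\varphi(\Delta u)^2$; the coefficient in front of $\int u^{r-2}|\nabla u|^4\varphi$ becomes $\tfrac{r(r-1)}{2}-k(r-1)+\tfrac{(n-1)k^2}{n}=-\lambda_1$; the coefficient in front of $\int u^{r-1}|\nabla u|^2\Delta u\,\varphi$ becomes $\tfrac{3r}{2}-\tfrac{(n+2)k}{n}=\lambda_2$; and the remaining cutoff terms equal $J-k\int u^{r-1}|\nabla u|^2\nabla u\cdot\nabla\varphi$, whose coefficient of $\int u^{r-1}|\nabla u|^2\nabla u\cdot\nabla\varphi$ is $r-k$, so this is precisely $I_1$. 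Writing $\Delta u=-(-\Delta u)$ (which leaves the squared term invariant and produces one sign change on the $\lambda_2$ integral) yields the stated form. The main technical challenge is bookkeeping: tracking every $\nabla\varphi$ and $\Delta\varphi$ contribution across the two integrations by parts so that they reorganize exactly into $I_1$; no idea beyond Bochner plus the Cauchy--Schwarz estimate on the modified Hessian is required.
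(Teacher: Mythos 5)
Your proof is correct: the exact Bochner identity, the Cauchy--Schwarz bound $|H|^2\geq\frac{1}{n}(\operatorname{tr}H)^2$ for the modified Hessian, and the final bookkeeping all check out and reproduce $\lambda_1$, $\lambda_2$, $\frac{n-1}{n}$ and $I_1$ precisely. The paper omits the proof and simply cites Lemma 8.9 of Quittner--Souplet, indicating it follows from the Bochner formula and integration by parts, which is exactly the route you take (note only that your argument never actually uses $k\neq-1$; that hypothesis is inherited from the cited source rather than needed here).
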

	
	A simple yet powerful technique is central to our approach for handling the coupled terms. We state it as a lemma due to its importance, although its proof is straightforward.
	\begin{lemma} (Picone's identity)\label{picone-identity}
		Suppose that $\Omega\subseteq\mathbb{R}^n$ and $u, v\in C^2(\Omega)$ with $v$ being positive. Then we have
		\begin{equation*}
			|\nabla u|^2-\nabla \big(\frac{u^2}{v}\big)\cdot\nabla v=|\nabla u-\frac{u}{v}\nabla v|^2.
		\end{equation*}
	\end{lemma}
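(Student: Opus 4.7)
My plan is to establish Picone's identity by a direct algebraic expansion; no analytic machinery is required beyond the quotient and product rules for gradients, which are legitimate under the stated hypotheses $u,v\in C^2(\Omega)$ and $v>0$.

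First, I would compute the left-hand side. Applying the quotient rule gives
\[
\nabla\!\left(\frac{u^2}{v}\right) = \frac{2u\,\nabla u}{v} - \frac{u^2\,\nabla v}{v^2},
\]
and taking the inner product with $\nabla v$ yields
\[
\nabla\!\left(\frac{u^2}{v}\right)\cdot\nabla v = \frac{2u\,\nabla u\cdot\nabla v}{v} - \frac{u^2\,|\nabla v|^2}{v^2}.
\]
Subtracting this from $|\nabla u|^2$ produces
\[
|\nabla u|^2 - \nabla\!\left(\frac{u^2}{v}\right)\cdot\nabla v = |\nabla u|^2 - \frac{2u}{v}\,\nabla u\cdot\nabla v + \frac{u^2}{v^2}\,|\nabla v|^2.
\]

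Next, I would expand the right-hand side by the definition of the squared Euclidean norm of a vector:
\[
\left|\nabla u - \frac{u}{v}\nabla v\right|^2 = |\nabla u|^2 - \frac{2u}{v}\,\nabla u\cdot\nabla v + \frac{u^2}{v^2}\,|\nabla v|^2.
\]
The two expressions coincide term by term, so the identity follows.

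There is no substantive obstacle: the statement is a pointwise algebraic identity, equivalent after rearrangement to the elementary expansion $|A-B|^2=|A|^2-2A\cdot B+|B|^2$ applied with $A=\nabla u$ and $B=(u/v)\nabla v$. The hypothesis $v>0$ is used solely to guarantee that $u^2/v$ is well-defined and of class $C^2$ on $\Omega$, while the $C^2$ regularity of $u$ and $v$ ensures that every gradient appearing above is classical, so the verification is valid pointwise on $\Omega$.
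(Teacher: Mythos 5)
Your proof is correct, and it is exactly the standard verification the paper has in mind: the authors omit the proof as "straightforward," and your direct expansion via the quotient rule and the identity $|A-B|^2=|A|^2-2A\cdot B+|B|^2$ is precisely that straightforward argument.
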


	With the help of the above preparations, we present key estimates that will be instrumental in our subsequent analysis.
	\begin{lemma}\label{estimates-tansform}
		Suppose that $r,s\in\mathbb{R}$ with $r\not \in \{-1,-2\}$. If $(u,v)$ is a pair of positive solutions to system \eqref{lesys}, then for any $\varphi\in C_c^2(\mathbb{R}^n)$ we have
		\begin{equation}\label{estimates-tansform-1}
			\begin{aligned}
				&\int_{\mathbb{R}^n} u^rv^s|\nabla u|^2\varphi-\frac{s(s-1)}{(r+1)(r+2)}\int_{\mathbb{R}^n} u^{r+2}v^{s-2}|\nabla v|^2\varphi\\
				=&\frac{1}{r+1}\int_{\mathbb{R}^n} u^{r+1}v^{p+s}\varphi-\frac{s}{(r+1)(r+2)}\int_{\mathbb{R}^n} u^{r+q+2}v^{s-1}\varphi+I_2,
			\end{aligned}
		\end{equation}
		where
		\begin{equation*}
			I_2=-\frac{1}{r+1}\int_{\mathbb{R}^n} u^{r+1}v^s\nabla u\cdot\nabla\varphi+\frac{s}{(r+1)(r+2)}\int_{\mathbb{R}^n} u^{r+2}v^{s-1}\nabla v\cdot \nabla\varphi.
		\end{equation*}
	\end{lemma}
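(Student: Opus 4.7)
The identity in Lemma \ref{estimates-tansform} looks like a pure integration-by-parts statement: there are no error terms aside from $I_2$, so no inequalities (and hence neither Obata nor Picone) should be needed here. My plan is to generate two integral identities from the two equations of \eqref{lesys} and then eliminate the mixed gradient term $\int u^{r+1}v^{s-1}\nabla u\cdot\nabla v\,\varphi$ algebraically.

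First I would multiply $-\Delta u=v^p$ by the test function $u^{r+1}v^s\varphi$ and integrate over $\mr^n$. Since $\varphi$ is compactly supported, integration by parts on the left gives
\begin{equation*}
\int_{\mr^n} \nabla u\cdot \nabla\bigl(u^{r+1}v^s\varphi\bigr)=\int_{\mr^n} u^{r+1}v^{p+s}\varphi.
\end{equation*}
Expanding the gradient on the left produces the three terms $(r+1)u^r v^s|\nabla u|^2\varphi$, $s u^{r+1}v^{s-1}\varphi\,\nabla u\cdot\nabla v$, and $u^{r+1}v^s\,\nabla u\cdot\nabla\varphi$. Dividing through by $r+1$ (using $r\neq -1$) yields
\begin{equation*}
\int u^r v^s|\nabla u|^2\varphi =\frac{1}{r+1}\int u^{r+1}v^{p+s}\varphi-\frac{s}{r+1}\int u^{r+1}v^{s-1}\varphi\,\nabla u\cdot\nabla v-\frac{1}{r+1}\int u^{r+1}v^s\nabla u\cdot\nabla\varphi.
\end{equation*}

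Next I would play the same game against $-\Delta v=u^q$ using the test function $u^{r+2}v^{s-1}\varphi$. Integration by parts and the product rule give
\begin{equation*}
(r+2)\int u^{r+1}v^{s-1}\varphi\,\nabla u\cdot\nabla v+(s-1)\int u^{r+2}v^{s-2}|\nabla v|^2\varphi+\int u^{r+2}v^{s-1}\nabla v\cdot\nabla\varphi=\int u^{r+q+2}v^{s-1}\varphi.
\end{equation*}
Because $r\neq -2$, I can solve this for the mixed gradient integral $\int u^{r+1}v^{s-1}\varphi\,\nabla u\cdot\nabla v$.

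Finally I would substitute that expression into the identity from the first step. The mixed gradient term is eliminated, the coefficient $-\frac{s(s-1)}{(r+1)(r+2)}$ automatically appears in front of $\int u^{r+2}v^{s-2}|\nabla v|^2\varphi$, and the two $\nabla\varphi$ terms collect exactly into $I_2$. I do not foresee a substantial obstacle: the only subtlety is checking signs and the bookkeeping of the factors $r+1$ and $r+2$. The restriction $r\notin\{-1,-2\}$ enters precisely to make these divisions legal; positivity of $u$ and $v$, combined with compact support of $\varphi$, is enough to justify all boundary-free integrations by parts. Picone's identity and the Obata-type inequality will only be invoked later when proving Theorem \ref{estimate-theorem}, where genuine inequalities and the coupled gradient structure appear.
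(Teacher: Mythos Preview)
Your proposal is correct and matches the paper's own proof essentially step for step: the paper also obtains the first identity by integrating by parts against $u^{r+1}v^s\varphi$ using $-\Delta u=v^p$, then handles the cross term $\int u^{r+1}v^{s-1}\nabla u\cdot\nabla v\,\varphi$ via a second integration by parts against $u^{r+2}v^{s-1}\varphi$ using $-\Delta v=u^q$, and finally substitutes one identity into the other. The only cosmetic difference is that the paper writes $u^r|\nabla u|^2=\frac{1}{r+1}\nabla(u^{r+1})\cdot\nabla u$ before integrating, whereas you phrase it as multiplying the PDE by a test function; the computations are identical.
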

	
	\begin{proof} By using the divergence theorem and integration by parts, we have
		\begin{equation}\label{estimates-tansform-3}
			\begin{aligned}
				\int_{\mathbb{R}^n} u^rv^s|\nabla u|^2\varphi=&\frac{1}{r+1}\int_{\mathbb{R}^n}v^s\nabla\big(u^{r+1}\big)\cdot\nabla u\varphi\\
				=&-\frac{1}{r+1}\int_{\mathbb{R}^n} u^{r+1}\big(v^s\Delta u+sv^{s-1}\nabla u\cdot\nabla v\big)\varphi\\
				&-\frac{1}{r+1}\int_{\mathbb{R}^n} u^{r+1}v^s\nabla u\cdot\nabla\varphi.
			\end{aligned}
		\end{equation}
		Since $(u,v)$ satisfies the system \eqref{lesys}, the above identity \eqref{estimates-tansform-3} can be written as
		\begin{equation}\label{estimates-tansform-4}
			\begin{aligned}
				\int_{\mathbb{R}^n} u^rv^s|\nabla u|^2\varphi=&\frac{1}{r+1}\int_{\mathbb{R}^n} u^{r+1}v^{p+s}\varphi-\frac{s}{r+1}\int_{\mathbb{R}^n} u^{r+1}v^{s-1}\nabla u\cdot \nabla v\varphi\\
				&-\frac{1}{r+1}\int_{\mathbb{R}^n} u^{r+1}v^s\nabla u\cdot\nabla\varphi.
			\end{aligned}
		\end{equation}
		
		Now, we deal with the second term of the right hand side of \eqref{estimates-tansform-4}. Due to the same reason as before,  a direct computation yields that
		\begin{equation}\label{estimates-tansform-5}
			\begin{aligned}
				&\int_{\mathbb{R}^n} u^{r+1}v^{s-1}\nabla u\cdot\nabla v\varphi=\frac{1}{r+2}\int_{\mathbb{R}^n} v^{s-1}\nabla\big(u^{r+2}\big)\cdot \nabla v\varphi\\
				=&-\frac{1}{r+2}\int_{\mathbb{R}^n} u^{r+2}\big(v^{s-1}\Delta v+(s-1)v^{s-2}|\nabla v|^2\big)\varphi-\frac{1}{r+2}\int_{\mathbb{R}^n} u^{r+2}v^{s-1}\nabla v\cdot\nabla\varphi\\
				=&\frac{1}{r+2}\int_{\mathbb{R}^n} u^{r+q+2}v^{s-1}\varphi-\frac{s-1}{r+2}\int_{\mathbb{R}^n} u^{r+2}v^{s-2}|\nabla v|^2\varphi-\frac{1}{r+2}\int_{\mathbb{R}^n} u^{r+2}v^{s-1}\nabla v\cdot\nabla\varphi.
			\end{aligned}
		\end{equation}
		Combining \eqref{estimates-tansform-5}  with \eqref{estimates-tansform-4}, we obtain the identity \eqref{estimates-tansform-1} and finish the proof of this lemma.
	\end{proof}

	\begin{lemma}\label{estimates-picone}
		Suppose that $r,s, \theta\in\mathbb{R}$ with $r\neq-2$. If $(u,v)$ is a pair of positive solutions  to the  system \eqref{lesys},  for  any non-negative  $\varphi\in C_c^2(\mathbb{R}^n)$, there holds
		\begin{equation*}
			\begin{aligned}
				\int_{\mathbb{R}^n} u^rv^s|\nabla u|^2\varphi&\geq \frac{2\theta}{r+2}\int_{\mathbb{R}^n} u^{r+q+2}v^{s-1}\varphi\\
				&-\frac{2\theta}{r+2}\big(\theta+s+\frac{\theta r}{2}-1\big)\int_{\mathbb{R}^n} u^{r+2}v^{s-2}|\nabla v|^2\varphi+I_3,
			\end{aligned}
		\end{equation*}
		where
		\begin{equation*}
			I_3=-\frac{2\theta}{r+2}\int_{\mathbb{R}^n} u^{r+2}v^{s-1}\nabla v\cdot \nabla\varphi.
		\end{equation*}
	\end{lemma}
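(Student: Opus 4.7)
The plan is to obtain the inequality from the non-negativity of a pointwise perfect square (which is precisely Picone's identity, Lemma~\ref{picone-identity}, applied to suitable powers of $u$ and $v$), integrate against the non-negative weight $v^s\varphi$, and then rewrite the resulting cross term $\int u^{r+1}v^{s-1}\nabla u\cdot\nabla v\,\varphi$ using the identity~\eqref{estimates-tansform-5} that was already derived in the proof of Lemma~\ref{estimates-tansform}.

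Concretely, since $u,v>0$, for any $\theta\in\mathbb{R}$ the pointwise bound
\begin{equation*}
0\leq \Big|u^{r/2}\nabla u-\theta\,u^{(r+2)/2}v^{-1}\nabla v\Big|^2 = u^r|\nabla u|^2 - 2\theta\,\frac{u^{r+1}}{v}\nabla u\cdot\nabla v + \theta^2\,\frac{u^{r+2}}{v^2}|\nabla v|^2
\end{equation*}
holds. (Equivalently, one applies Lemma~\ref{picone-identity} with $U=u^{(r+2)/2}$ and $V=v^{\theta(r+2)/2}$ and then divides by $((r+2)/2)^2$.) Multiplying by $v^s\varphi\geq 0$ and integrating over $\mathbb{R}^n$ yields
\begin{equation*}
\int_{\mathbb{R}^n} u^r v^s|\nabla u|^2\varphi \geq 2\theta\int_{\mathbb{R}^n}u^{r+1}v^{s-1}\nabla u\cdot\nabla v\,\varphi - \theta^2\int_{\mathbb{R}^n}u^{r+2}v^{s-2}|\nabla v|^2\varphi.
\end{equation*}

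Next, I would invoke identity~\eqref{estimates-tansform-5} (valid precisely when $r\neq -2$, which is the only place this hypothesis is used), to replace the cross term by
\begin{equation*}
\frac{1}{r+2}\int u^{r+q+2}v^{s-1}\varphi-\frac{s-1}{r+2}\int u^{r+2}v^{s-2}|\nabla v|^2\varphi-\frac{1}{r+2}\int u^{r+2}v^{s-1}\nabla v\cdot\nabla\varphi.
\end{equation*}
Substituting this back, the coefficient of $\int u^{r+q+2}v^{s-1}\varphi$ is $\frac{2\theta}{r+2}$, and the boundary-type piece is exactly $I_3$. What remains is the algebraic check of the coefficient of $\int u^{r+2}v^{s-2}|\nabla v|^2\varphi$, namely
\begin{equation*}
-\frac{2\theta(s-1)}{r+2}-\theta^2 = -\frac{2\theta}{r+2}\left(s-1+\frac{\theta(r+2)}{2}\right) = -\frac{2\theta}{r+2}\left(\theta+s+\frac{\theta r}{2}-1\right),
\end{equation*}
which matches the claimed expression.

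The proof is essentially bookkeeping; no analytic obstacle arises, because $\varphi$ has compact support (so all integration by parts in the derivation of \eqref{estimates-tansform-5} produce no boundary terms) and the positivity of $u$ and $v$ makes every power well-defined. The only mild subtlety is the restriction $r\neq-2$, which is dictated solely by the integration-by-parts step producing the factor $\frac{1}{r+2}$; the analogous restriction $r\neq-1$ appearing in Lemma~\ref{estimates-tansform} is \emph{not} needed here, because we never integrate by parts with weight $u^{r+1}$ at the outer level.
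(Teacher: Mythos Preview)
Your proof is correct and follows essentially the same approach as the paper: a pointwise Picone-type square inequality followed by one integration by parts in the $u$-variable. The only organizational difference is that the paper keeps the Picone term in the form $\nabla\big((u^2/v^\theta)^{(r+2)/2}\big)\cdot\nabla v$ and integrates that composite by parts directly, whereas you first expand the square to isolate the cross term $\int u^{r+1}v^{s-1}\nabla u\cdot\nabla v\,\varphi$ and then recycle identity~\eqref{estimates-tansform-5} from the proof of Lemma~\ref{estimates-tansform}; the two computations are algebraically equivalent, and your reuse of~\eqref{estimates-tansform-5} is a clean way to avoid repeating that step.
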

	
	\begin{proof}
		By  using Picone's identity (See Lemma \ref{picone-identity}), we have
		\begin{equation}\label{estimates-picone-3}
			\begin{aligned}
				&\int_{\mathbb{R}^n} u^rv^s|\nabla u|^2\varphi\geq\int_{\mathbb{R}^n} u^rv^s\nabla\big(\frac{u^2}{v^\theta}\big)\cdot\nabla v^\theta\varphi\\
				=&\theta\int_{\mathbb{R}^n}  v^{s+\theta+\frac{\theta r}{2}-1}\big(\frac{u^2}{v^\theta}\big)^{\frac{r}{2}}\nabla\big(\frac{u^2}{v^\theta}\big)\cdot\nabla v\varphi\\
				=&\frac{2\theta}{r+2}\int_{\mathbb{R}^n} v^{s+\theta+\frac{\theta r}{2}-1} \nabla\big(\frac{u^2}{v^\theta}\big)^{\frac{r+2}{2}}\cdot\nabla v \varphi\\
			\end{aligned}
		\end{equation}
		Using the divergence theorem and \eqref{lesys}, we have
		\begin{equation}\label{estimates-picone-4}
			\begin{aligned}
				&\frac{2\theta}{r+2}\int_{\mathbb{R}^n}  v^{s+\theta+\frac{\theta r}{2}-1}\nabla\big(\frac{u^2}{v^\theta}\big)^{\frac{r+2}{2}}\cdot\nabla v\varphi\\
				=&-\frac{2\theta}{r+2}\int_{\mathbb{R}^n} \big(\frac{u^2}{v^\theta}\big)^{\frac{r+2}{2}}\big(v^{s+\theta+\frac{\theta r}{2}-1}\Delta v+(\theta+s+\frac{\theta r}{2}-1)v^{s+\theta+\frac{\theta r}{2}-2}|\nabla v|^2\big)\varphi-I_3\\
				=&\frac{2\theta}{r+2}\int_{\mathbb{R}^n} u^{r+q+2}v^{s-1}\varphi-\frac{2\theta}{r+2}\big(\theta+s+\frac{\theta r}{2}-1\big)\int_{\mathbb{R}^n} u^{r+2}v^{s-2}|\nabla v|^2\varphi-I_3.
			\end{aligned}
		\end{equation}
		Combining  \eqref{estimates-picone-3} with \eqref{estimates-picone-4}, we finish the proof of this lemma.
	\end{proof}

	\begin{lemma}\label{estimates-combine}
		Suppose that $r,s\in\mathbb{R}$ with $r\not\in\{-1,-2\}.$ If $(u,v)$ is a pair of positive solutions to the system \eqref{lesys}, for any non-negative  $\varphi\in C_c^2(\mathbb{R}^n)$,  we have
		\begin{equation}\label{estimates-combine-1}
			\frac{1}{r+1}\int_{\mathbb{R}^n} u^{r+1}v^{p+s}\varphi+\mu_1\int_{\mathbb{R}^n} u^{r+2}v^{s-2}|\nabla v|^2\varphi\geq \mu_2\int_{\mathbb{R}^n} u^{r+q+2}v^{s-1}\varphi+I_4,
		\end{equation}
		where
		\begin{equation*}
			\mu_1=\frac{s(s-1)}{(r+1)(r+2)}+\frac{2\theta}{r+2}\big(\theta+s+\frac{\theta r}{2}-1\big),~~\mu_2=\frac{1}{r+2}\big(2\theta+\frac{s}{r+1}\big)
		\end{equation*}
		and
		\begin{equation*}
			I_4=I_3-I_2.
		\end{equation*}
	\end{lemma}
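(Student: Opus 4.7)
The plan is to simply combine the previous two lemmas. Lemma \ref{estimates-tansform} gives an \emph{identity} for $\int_{\mathbb{R}^n} u^r v^s |\nabla u|^2\varphi$ in terms of the coupled source integrals and a $|\nabla v|^2$ term, while Lemma \ref{estimates-picone} provides a lower \emph{bound} for the same quantity, involving a free parameter $\theta$. Equating the identity with the bound and rearranging will produce the claimed inequality, with $\mu_1,\mu_2$ arising directly from grouping the coefficients and $I_4$ from combining the boundary/gradient-of-$\varphi$ terms.

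Concretely, I would first rewrite \eqref{estimates-tansform-1} as
\begin{equation*}
\int_{\mathbb{R}^n} u^r v^s |\nabla u|^2 \varphi
= \frac{s(s-1)}{(r+1)(r+2)}\int_{\mathbb{R}^n} u^{r+2}v^{s-2}|\nabla v|^2\varphi
+ \frac{1}{r+1}\int_{\mathbb{R}^n} u^{r+1}v^{p+s}\varphi
- \frac{s}{(r+1)(r+2)}\int_{\mathbb{R}^n} u^{r+q+2}v^{s-1}\varphi + I_2,
\end{equation*}
which is legitimate since $r\neq -1,-2$. Then I would substitute this into the left-hand side of the inequality in Lemma \ref{estimates-picone}, obtaining
\begin{equation*}
\begin{aligned}
&\frac{1}{r+1}\int_{\mathbb{R}^n} u^{r+1}v^{p+s}\varphi
+ \frac{s(s-1)}{(r+1)(r+2)}\int_{\mathbb{R}^n} u^{r+2}v^{s-2}|\nabla v|^2\varphi
- \frac{s}{(r+1)(r+2)}\int_{\mathbb{R}^n} u^{r+q+2}v^{s-1}\varphi + I_2 \\
&\qquad \geq \frac{2\theta}{r+2}\int_{\mathbb{R}^n} u^{r+q+2}v^{s-1}\varphi
- \frac{2\theta}{r+2}\Bigl(\theta+s+\tfrac{\theta r}{2}-1\Bigr)\int_{\mathbb{R}^n} u^{r+2}v^{s-2}|\nabla v|^2\varphi + I_3.
\end{aligned}
\end{equation*}

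Finally I would collect terms: moving the $|\nabla v|^2$ term from the right to the left gives the coefficient $\mu_1=\frac{s(s-1)}{(r+1)(r+2)}+\frac{2\theta}{r+2}(\theta+s+\frac{\theta r}{2}-1)$, while moving the $u^{r+q+2}v^{s-1}$ term from the left to the right produces $\mu_2=\frac{1}{r+2}\bigl(2\theta+\frac{s}{r+1}\bigr)$. The remaining boundary-type integrals combine to $I_3-I_2 = I_4$, yielding \eqref{estimates-combine-1}. The argument is purely algebraic once the two previous lemmas are in hand; there is no real obstacle beyond carefully matching the coefficients, and I would double-check the sign of $I_4$ since the asymmetric definitions of $I_2$ and $I_3$ (one carrying a minus sign, one not) are the only place a slip could occur.
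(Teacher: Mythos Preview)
Your proposal is correct and follows essentially the same approach as the paper: the paper's proof simply writes down the inequality obtained by inserting the identity from Lemma \ref{estimates-tansform} into the lower bound from Lemma \ref{estimates-picone} and then observes that rearranging gives \eqref{estimates-combine-1}. Your coefficient bookkeeping for $\mu_1$, $\mu_2$, and $I_4=I_3-I_2$ is accurate.
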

	
	\begin{proof}
		Using  Lemma \ref{estimates-tansform} and Lemma \ref{estimates-picone}, we have
		\begin{equation*}
			\begin{aligned}
				&\frac{2\theta}{r+2}\int_{\mathbb{R}^n} u^{r+q+2}v^{s-1}\varphi-\frac{2\theta}{r+2}\big(\theta+s+\frac{\theta r}{2}-1\big)\int_{\mathbb{R}^n} u^{r+2}v^{s-2}|\nabla v|^2\varphi+I_3\\
				\leq&\frac{1}{r+1}\int_{\mathbb{R}^n} u^{r+1}v^{p+s}\varphi-\frac{s}{(r+1)(r+2)}\int_{\mathbb{R}^n} u^{r+q+2}v^{s-1}\varphi\\
				&~~~~~~~~~~~~~~~~~~~~~~~~~~~~~~~~~~~+\frac{s(s-1)}{(r+1)(r+2)}\int_{\mathbb{R}^n} u^{r+2}v^{s-2}|\nabla v|^2\varphi+I_2,
			\end{aligned}
		\end{equation*}
		which implies \eqref{estimates-combine-1}.
	\end{proof}

	Choosing  $r=q-2$ and replacing $s$ by $s+1$ in Lemma \ref{estimates-combine}, we have the following proposition.
	\begin{proposition}\label{keyestimates}
		Suppose that $q>0$ with $q\neq 1$ and $s, \tilde{\theta}\in\mathbb{R}$. If $(u,v)$ is a pair of positive solutions to the system \eqref{lesys}, then for any non-negative  $\varphi\in C_c^2(\mathbb{R}^n)$,  we have
		\begin{equation*}
			\mu_3\int_{\mathbb{R}^n} u^qv^{s-1}|\nabla v|^2\varphi\geq -\frac{1}{q-1}\int_{\mathbb{R}^n} u^{q-1}v^{p+s+1}\varphi+\mu_4\int_{\mathbb{R}^n} u^{2q}v^s\varphi+I_5,
		\end{equation*}
		where
		\begin{equation*}
			\mu_3=\frac{(s+\tilde{\theta} q)\big(s+\tilde{\theta}(q-2)+1\big)+\tilde{\theta}(\tilde{\theta}-1)q}{q(q-1)},~~\mu_4=\frac{2\tilde{\theta}(q-1)+s+1}{q(q-1)}
		\end{equation*}
		and
		\begin{equation*}
			I_5=\frac{1}{q-1}\int_{\mathbb{R}^n} u^{q-1}v^{s+1}\nabla u\cdot \nabla\varphi-\frac{2\tilde{\theta}(q-1)+s+1}{q(q-1)}\int_{\mathbb{R}^n} u^qv^s\nabla v\cdot\nabla\varphi.
		\end{equation*}
	\end{proposition}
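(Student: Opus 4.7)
The plan is to derive this proposition as a direct corollary of Lemma \ref{estimates-combine} by making the parameter substitution $r = q-2$ and renaming $s \mapsto s+1$, $\theta \mapsto \tilde{\theta}$, then rearranging. First I would verify that these substitutions transform the exponents correctly. With $r = q-2$ one has $r+1 = q-1$, $r+2 = q$, and $r+q+2 = 2q$; together with the shift $s \mapsto s+1$, the three integrand templates $u^{r+1}v^{p+s}$, $u^{r+2}v^{s-2}|\nabla v|^2$, and $u^{r+q+2}v^{s-1}$ appearing in Lemma \ref{estimates-combine} become $u^{q-1}v^{p+s+1}$, $u^q v^{s-1}|\nabla v|^2$, and $u^{2q}v^s$, respectively. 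These exactly match the three integrand templates in the statement of the proposition. The restriction $q \neq 1$ (equivalently $r \neq -1$ and $r+1 \neq 0$) guarantees the substitution stays within the scope of Lemma \ref{estimates-combine}.

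Second, I would rearrange Lemma \ref{estimates-combine} by moving the $\frac{1}{q-1}\int u^{q-1}v^{p+s+1}\varphi$ term to the right-hand side, which produces the negative sign on that term in the target inequality, and solve for the remaining left-hand side $\mu_1 \int u^q v^{s-1}|\nabla v|^2 \varphi$. Then the identification $\mu_3 = \mu_1\big|_{r=q-2,\; s \to s+1,\; \theta \to \tilde{\theta}}$ and $\mu_4 = \mu_2\big|_{r=q-2,\; s \to s+1,\; \theta \to \tilde{\theta}}$ needs to be verified. For $\mu_4$, the formula $\mu_2 = \frac{1}{r+2}(2\theta + \frac{s}{r+1})$ specializes immediately to $\frac{1}{q}\bigl(2\tilde{\theta} + \frac{s+1}{q-1}\bigr) = \frac{2\tilde{\theta}(q-1)+s+1}{q(q-1)}$ as required.

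Third, for $\mu_3$ a short algebraic expansion is the only nontrivial computation. Expanding the numerator in the proposition yields
\begin{equation*}
(s+\tilde{\theta}q)(s+\tilde{\theta}(q-2)+1) + \tilde{\theta}(\tilde{\theta}-1)q = s(s+1) + 2s\tilde{\theta}(q-1) + \tilde{\theta}^2 q(q-1),
\end{equation*}
so dividing by $q(q-1)$ gives $\frac{s(s+1)}{q(q-1)} + \frac{2s\tilde{\theta}}{q} + \tilde{\theta}^2$. On the other side, specializing $\mu_1 = \frac{s(s-1)}{(r+1)(r+2)} + \frac{2\theta}{r+2}\bigl(\theta + s + \tfrac{\theta r}{2} - 1\bigr)$ gives $\frac{(s+1)s}{q(q-1)} + \frac{2\tilde{\theta}}{q}\bigl(\tilde{\theta} + s + \tfrac{\tilde{\theta}(q-2)}{2}\bigr)$, which simplifies to exactly the same expression. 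Hence $\mu_3 = \mu_1$ after substitution.

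Finally, for the boundary term $I_5$, I would take $I_4 = I_3 - I_2$ and specialize: $I_2$ becomes $-\frac{1}{q-1}\int u^{q-1}v^{s+1}\nabla u \cdot \nabla\varphi + \frac{s+1}{q(q-1)}\int u^q v^s \nabla v \cdot \nabla\varphi$, while $I_3$ becomes $-\frac{2\tilde{\theta}}{q}\int u^q v^s \nabla v \cdot \nabla\varphi$. Subtracting and combining the two $\nabla v \cdot \nabla\varphi$ integrals with common denominator $q(q-1)$ yields exactly $I_5 = \frac{1}{q-1}\int u^{q-1}v^{s+1}\nabla u \cdot \nabla\varphi - \frac{2\tilde{\theta}(q-1)+s+1}{q(q-1)}\int u^q v^s \nabla v \cdot \nabla\varphi$. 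Nothing in this chain of substitutions is deep; the main obstacle is merely bookkeeping the algebraic expansion for $\mu_3$ carefully, and the proof will be very short.
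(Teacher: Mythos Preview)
Your proposal is correct and follows exactly the paper's approach: the paper simply states that the proposition follows by choosing $r=q-2$ and replacing $s$ by $s+1$ in Lemma~\ref{estimates-combine}, and you have carefully carried out and verified that substitution. Your algebraic checks for $\mu_3$, $\mu_4$, and $I_5$ are all accurate, and the hypothesis $q>0$, $q\neq 1$ ensures $r=q-2\notin\{-1,-2\}$ so that Lemma~\ref{estimates-combine} applies.
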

	
	\hspace{3em}
	
	As  for  $q=1$, by using  Lemma \ref{estimates-tansform},  we have the following estimate.
	\begin{proposition}\label{keyestimates-biharmonic}
		Suppose that $s\in\mathbb{R}$ with $s
		\not \in \{0,-1\}$, and $(u,v)$ is a pair of positive solutions to the  system \eqref{lesys} with $q=1$.Then for non-negative any $\varphi\in C_c^2(\mathbb{R}^n)$, we have
		\begin{equation*}
			\int_{\mathbb{R}^n} uv^{s-1}|\nabla v|^2\varphi=\frac{1}{s}\int_{\mathbb{R}^n} u^2v^s\varphi-\frac{1}{s(s+1)}\int_{\mathbb{R}^n}v^{p+1+s}\varphi+I_6,
		\end{equation*}
		where
		\begin{equation*}
			I_6=\frac{1}{s(s+1)}\int_{\mathbb{R}^n} v^{s+1}\nabla u\nabla\varphi-\frac{1}{s}\int_{\mathbb{R}^n} uv^s\nabla v\nabla\varphi.
		\end{equation*}
	\end{proposition}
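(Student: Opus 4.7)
The plan is to derive the identity as a direct specialization of Lemma \ref{estimates-tansform}, after exploiting a symmetry that becomes available precisely when $q=1$.

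The first observation is that when $q=1$, the pair $(\tilde u,\tilde v):=(v,u)$ is itself a positive classical solution of system \eqref{lesys} with exponent pair $(\tilde p,\tilde q)=(q,p)=(1,p)$. Consequently Lemma \ref{estimates-tansform} may be applied directly to $(\tilde u,\tilde v)$ with the associated exponents, for any parameters $r',s'\in\mathbb{R}$ subject only to $r'\notin\{-1,-2\}$.

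The second step is to pick $r'$ and $s'$ so that the lemma's identity collapses to the one asserted. The critical choice is $s'=1$: this makes the coefficient $s'(s'-1)/[(r'+1)(r'+2)]$ vanish, thereby eliminating the $|\nabla \tilde v|^2=|\nabla u|^2$ term on the left-hand side of \eqref{estimates-tansform-1}, which does not appear in the statement of Proposition \ref{keyestimates-biharmonic}. I would then set $r'=s-1$, where $s$ denotes the proposition's parameter. The admissibility condition $r'\notin\{-1,-2\}$ becomes $s\notin\{0,-1\}$, matching the hypothesis.

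With these choices, translating Lemma \ref{estimates-tansform} back via $(\tilde u,\tilde v)=(v,u)$ and $(\tilde p,\tilde q)=(1,p)$ produces the left-hand side $\int uv^{s-1}|\nabla v|^2\varphi$; the first right-hand term $\frac{1}{s}\int u^2 v^s\varphi$, coming from $\tilde u^{r'+1}\tilde v^{\tilde p+s'}=v^s u^2$; the second right-hand term $-\frac{1}{s(s+1)}\int v^{p+s+1}\varphi$, coming from $\tilde u^{r'+\tilde q+2}\tilde v^{s'-1}=v^{p+s+1}u^0$; and the $I_6$ term, which matches the lemma's $I_2$ after the same substitution. Since the argument is essentially bookkeeping of exponents, no step should present a real obstacle; alternatively, one could simply rerun the proof of Lemma \ref{estimates-tansform} verbatim with the roles of $u$ and $v$ interchanged.
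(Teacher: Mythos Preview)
Your proposal is correct and is essentially the paper's own approach: the paper simply says the result follows from Lemma~\ref{estimates-tansform}, and the swap $(\tilde u,\tilde v)=(v,u)$ with $r'=s-1$, $s'=1$ is precisely the specialization that makes the identity drop out. Your check of the exponents and of $I_2\to I_6$ is accurate, and the admissibility condition $r'\notin\{-1,-2\}$ indeed coincides with $s\notin\{0,-1\}$.
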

	
	\hspace{3em}
	
	By synthesizing the results of Lemma \ref{obata-inequality}, Proposition \ref{keyestimates}, and Proposition \ref{keyestimates-biharmonic}, we obtain the following central estimate of this paper.
	
	\begin{theorem}\label{estimate-theorem}
		Suppose that $p, q>0$, $k_1, k_2, r, s, \theta_1, \theta_2\in \mathbb{R}$. If $(u,v)$ is a positive solution of system \eqref{lesys}, then for any non-negative $\varphi, \psi\in C_c^2(\mathbb{R}^n)$, we have
		\begin{equation}\label{estimates-theorem1}
			\left\{ \begin{aligned}
				&\alpha_1\int_{\mathbb{R}^n} u^{r-2}|\nabla u|^4\varphi+\beta_1\int_{\mathbb{R}^n} u^rv^{2p}\varphi\leq \gamma_1\int_{\mathbb{R}^n} u^{r+q+1}v^{p-1}\varphi+I_7,\\
				&\alpha_2\int_{\mathbb{R}^n} v^{s-2}|\nabla v|^4\psi+\beta_2\int_{\mathbb{R}^n} u^{2q}v^s\psi\leq \gamma_2\int_{\mathbb{R}^n} u^{q-1}v^{p+s+1}\psi+I_8,
			\end{aligned} \right.
		\end{equation}
		where
		\begin{equation}\label{alpha1 def}
			\alpha_1=-\frac{n-1}{n}k_1^2+k_1(r-1)-\frac{1}{2}r(r-1),
		\end{equation}
		\begin{equation}\label{alpha2 def}
			\alpha_2=-\frac{n-1}{n}k_2^2+k_2(s-1)-\frac{1}{2}s(s-1),
		\end{equation}
		
		\begin{equation*}
			\beta_1=(\frac{3}{2}r-\frac{n+2}{n}k_1)\frac{2\theta_1(p-1)+r+1}{\delta_1}-\frac{n-1}{n},
		\end{equation*}
		
		\begin{equation*}
			\beta_2=(\frac{3}{2}s-\frac{n+2}{n}k_2)\frac{2\theta_2(q-1)+s+1}{\delta_2}-\frac{n-1}{n},
		\end{equation*}
		
		\begin{equation*}
			\gamma_1=(\frac{3}{2}r-\frac{n+2}{n}k_1)\frac{p}{\delta_1},~~~~\gamma_2=(\frac{3}{2}s-\frac{n+2}{n}k_2)\frac{q}{\delta_2},
		\end{equation*}
		
		\begin{equation}\label{delta1}
			\delta_1=(r+\theta_1 p)\big(r+\theta_1(p-2)+1\big)+\theta_1(\theta_1-1)p,
		\end{equation}
		
		\begin{equation}\label{delta2}
			\delta_2=(s+\theta_2 q)\big(s+\theta_2(q-2)+1\big)+\theta_2(\theta_2-1)q,
		\end{equation}
		
		\begin{equation*}
			\begin{aligned}
				I_7&=\frac{1}{2}\int_{\mathbb{R}^n} u^r|\nabla u|^2\Delta\varphi+(r-k_1)\int_{\mathbb{R}^n} u^{r-1}|\nabla u|^2\nabla u\cdot\nabla\varphi
				\\
				&~~~~~~+\big[\frac{2\theta_1(p-1)+2r+2}{\delta_1}(\frac{3}{2}r-\frac{n+2}{n}k_1)-1\big]\int_{\mathbb{R}^n} u^rv^p\nabla u\cdot\nabla\varphi\\
				&~~~~~~-\frac{1}{\delta_1}(\frac{3}{2}r-\frac{n+2}{n}k_1)\int_{\mathbb{R}^n} u^{r+1}v^p\Delta \varphi
			\end{aligned}
		\end{equation*}
		and
		\begin{equation*}
			\begin{aligned}
				I_8&=\frac{1}{2}\int_{\mathbb{R}^n} v^s|\nabla v|^2\Delta\psi+(s-k_2)\int_{\mathbb{R}^n} v^{s-1}|\nabla v|^2\nabla v\cdot\nabla\psi
				\\
				&~~~~~~+\big[\frac{2\theta_2(q-1)+2s+2}{\delta_2}(\frac{3}{2}s-\frac{n+2}{n}k_2)-1\big]\int_{\mathbb{R}^n} u^qv^s\nabla v\cdot\nabla\psi\\
				&~~~~~~-\frac{1}{\delta_2}(\frac{3}{2}s-\frac{n+2}{n}k_2)\int_{\mathbb{R}^n} u^qv^{s+1}\Delta\psi.
			\end{aligned}
		\end{equation*}
		Here,  we suppose  that
		$$
		\begin{cases}
			r\not \in\{0 , -1\},  &p=1,\\
			(p-1)(\frac{3}{2}r-\frac{n+2}{n}k_1)\delta_1>0, & p\not=1
		\end{cases}
		$$
		and
		$$
		\begin{cases}
			s\not \in\{0 , -1\},  &q=1,\\
			(q-1)(\frac{3}{2}s-\frac{n+2}{n}k_2)\delta_2>0, & q\not=1.
		\end{cases}
		$$
	\end{theorem}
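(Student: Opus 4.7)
The plan is to derive the first estimate in \eqref{estimates-theorem1}; the second follows by the evident symmetry of \eqref{lesys} under the swap $u\leftrightarrow v$, $p\leftrightarrow q$, $r\leftrightarrow s$, $k_1\leftrightarrow k_2$, $\theta_1\leftrightarrow\theta_2$. I would begin by applying Lemma \ref{obata-inequality} to $u$ with parameter $k=k_1$ and exponent $r$. After replacing every occurrence of $-\Delta u$ by $v^p$ via \eqref{lesys}, this yields
\begin{equation*}
\alpha_1\int_{\mathbb{R}^n} u^{r-2}|\nabla u|^4\varphi+\lambda_2\int_{\mathbb{R}^n} u^{r-1}|\nabla u|^2 v^p\varphi-\frac{n-1}{n}\int_{\mathbb{R}^n} u^r v^{2p}\varphi\leq I_1,
\end{equation*}
where $\lambda_2=\frac{3}{2}r-\frac{n+2}{n}k_1$, $\alpha_1$ is exactly as in \eqref{alpha1 def}, and the integrand $u^r\Delta u\,\nabla u\cdot\nabla\varphi$ inside $I_1$ simplifies to $-u^r v^p\nabla u\cdot\nabla\varphi$.

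The crucial step is to eliminate the mixed gradient term $\int u^{r-1}|\nabla u|^2 v^p\varphi$. For this I would apply the analogue of Proposition \ref{keyestimates} (when $p\neq 1$), or of Proposition \ref{keyestimates-biharmonic} (when $p=1$), obtained by interchanging the roles of $u$ and $v$ and of $p$ and $q$; this swap is legitimate because system \eqref{lesys} is itself symmetric under it. Concretely, running the proof of Proposition \ref{keyestimates} with $u,v$ exchanged, taking the corresponding parameters $R=p-2$ and $S=r+1$ in the swapped Lemma \ref{estimates-combine}, and using the Picone test function $u^2/v^{\theta_1}$, one arrives at
\begin{equation*}
\frac{\delta_1}{p(p-1)}\int_{\mathbb{R}^n} u^{r-1}|\nabla u|^2 v^p\varphi\geq -\frac{1}{p-1}\int_{\mathbb{R}^n} u^{r+q+1}v^{p-1}\varphi+\frac{2\theta_1(p-1)+r+1}{p(p-1)}\int_{\mathbb{R}^n} u^r v^{2p}\varphi+\widetilde{I}_5,
\end{equation*}
with $\delta_1$ as in \eqref{delta1} and $\widetilde{I}_5$ a boundary integral involving $\nabla\varphi$. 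The hypothesis $(p-1)\lambda_2\delta_1>0$ guarantees that multiplication by $\lambda_2 p(p-1)/\delta_1>0$ preserves the inequality, so that the resulting expression is indeed a lower bound for $\lambda_2\int u^{r-1}|\nabla u|^2 v^p\varphi$. When $p=1$ the swapped Proposition \ref{keyestimates-biharmonic} is an identity, so no sign condition is required beyond $r\notin\{0,-1\}$.

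Substituting this bound into the Obata inequality and collecting like terms yields the first estimate of \eqref{estimates-theorem1}: the two contributions to $\int u^r v^{2p}\varphi$ merge into $\beta_1=\lambda_2\cdot\frac{2\theta_1(p-1)+r+1}{\delta_1}-\frac{n-1}{n}$, the coefficient of $\int u^{r+q+1}v^{p-1}\varphi$ becomes $\gamma_1=\lambda_2\cdot\frac{p}{\delta_1}$, and the sum of $I_1$ with $-\lambda_2 p(p-1)\widetilde{I}_5/\delta_1$ gives $I_7$ after one further integration by parts on the contribution $\int v^{p-1}u^{r+1}\nabla v\cdot\nabla\varphi$, which trades the gradient $\nabla v$ for $\Delta\varphi$ and $\nabla u$ using $\nabla(v^p)=pv^{p-1}\nabla v$; this is precisely what produces the $\int u^{r+1}v^p\Delta\varphi$ term and merges all remaining first-order derivatives into the stated coefficient of $\int u^r v^p\nabla u\cdot\nabla\varphi$. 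The main obstacle is the careful bookkeeping of signs, coefficients, and boundary integrals; once those are collected the algebra is entirely mechanical, and the second estimate in \eqref{estimates-theorem1} is obtained by the identical argument applied to $v$ under the symmetric interchange.
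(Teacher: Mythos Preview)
Your proposal is correct and follows precisely the route the paper indicates: apply Lemma \ref{obata-inequality} to $u$, replace the mixed term $\lambda_2\int u^{r-1}|\nabla u|^2v^p\varphi$ via the $u\leftrightarrow v$, $p\leftrightarrow q$ swap of Proposition \ref{keyestimates} (resp.\ Proposition \ref{keyestimates-biharmonic} when $p=1$), multiply by $\lambda_2 p(p-1)/\delta_1>0$ under the stated sign hypothesis, and collect terms, with the final integration by parts on $\int u^{r+1}v^{p-1}\nabla v\cdot\nabla\varphi$ producing the $\Delta\varphi$ term in $I_7$. This is exactly the synthesis the paper announces before stating the theorem, and the second estimate indeed follows by the same symmetry you invoke.
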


	\hspace{4em}
	
	It is not hard to construct a smooth cut-off function $\eta(x)$  on $\mathbb{R}^n$ satisfying
	$0\leq \eta(x)\leq 1$,
	\[
	\eta(x) =
	\begin{cases}
		1, & x \in B_1(0), \\
		0, & x \in \mathbb{R}^n \setminus B_2(0).
	\end{cases}
	\]
	$$|\nabla \eta|\leq 2, \quad  |\Delta\eta|\leq 2.$$

	Prior to addressing the terms $I_7$ and $I_8$ in Theorem \ref{estimate-theorem}, we establish the following preparatory estimate.
	
	\begin{lemma}\label{uniform-estimates-error}
		Suppose that $p\geq q\geq1$ and  $pq>1$. Then,  for any integer $m>\alpha+\beta+2$,  we have
		\begin{equation*}
			\int_{\mathbb{R}^n} u^{q+1}\eta^m\leq C\big(\int_{\mathbb{R}^n}v^{p+1} \eta^m+1\big),
		\end{equation*}
		where $C$ is a positive constant which depends  on $p$, $q$, $m$ and $n$.
	\end{lemma}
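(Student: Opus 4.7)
The plan is to compare the two integrals via integration by parts against each equation. Multiplying the second equation $-\Delta v = u^q$ by $u\eta^m$ and the first equation $-\Delta u = v^p$ by $v\eta^m$, integrating by parts, and subtracting produces
\[
\int_{\mathbb{R}^n} u^{q+1}\eta^m - \int_{\mathbb{R}^n} v^{p+1}\eta^m = m\int_{\mathbb{R}^n} u\eta^{m-1}\nabla v\cdot\nabla\eta - m\int_{\mathbb{R}^n} v\eta^{m-1}\nabla u\cdot\nabla\eta.
\]
A further integration by parts on the first term of the right-hand side, combined with the identity $\nabla\cdot(\eta^{m-1}\nabla\eta) = \Delta(\eta^m)/m$, yields the cleaner form
\[
\int_{\mathbb{R}^n} u^{q+1}\eta^m = \int_{\mathbb{R}^n} v^{p+1}\eta^m - 2m\int_{\mathbb{R}^n} v\eta^{m-1}\nabla u\cdot\nabla\eta - \int_{\mathbb{R}^n} uv\,\Delta(\eta^m),
\]
in which both error terms are supported on the annulus $B_2\setminus B_1$.

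The next step is to bound each error term by $\varepsilon\int u^{q+1}\eta^m + C_\varepsilon \int v^{p+1}\eta^m + C_\varepsilon$. For the scalar term, note that $|\Delta(\eta^m)|\leq C_m\eta^{m-2}$ from the uniform bounds on $\eta$; Young's inequality with conjugate exponents $(q+1,(q+1)/q)$ gives $uv\leq \varepsilon u^{q+1} + C_\varepsilon v^{(q+1)/q}$, and the assumption $pq\geq 1$ forces $(q+1)/q\leq p+1$, so the pointwise inequality $v^{(q+1)/q}\leq 1 + v^{p+1}$ turns this into the required form. For the gradient error term, a further Young inequality of the shape $v|\nabla u|\leq \varepsilon v^{p+1} + C_\varepsilon(1+|\nabla u|^2)$ (valid since $(p+1)/p\leq 2$ when $p\geq 1$) reduces the problem to controlling $\int |\nabla u|^2$ times a cutoff weight. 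The latter is obtained from the standard energy identity by testing $-\Delta u = v^p$ against $u\eta^m$: this produces $\int |\nabla u|^2\eta^m$ in terms of $\int uv^p\eta^m$ plus lower-order cutoff contributions, and $\int uv^p\eta^m$ is then handled by Young combined with Lemma \ref{component-comparison} to convert $v$-powers into $u$-powers.

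The main obstacle is the mismatch in cutoff powers: the error terms carry weights $\eta^{m-1}|\nabla\eta|$ or $\eta^{m-2}|\nabla\eta|^2$, which are pointwise larger than $\eta^m$ since $\eta\leq 1$, and cannot be absorbed into the main terms directly. This is exactly where the hypothesis $m>\alpha+\beta+2$ (equivalently $m>\alpha(q+1)=\beta(p+1)$, as follows from the relations \eqref{alphabeta}) enters: it provides sufficient decay of the cutoff so that the Young-inequality absorption closes with finite constants depending only on $p$, $q$, $m$, and $n$. Once the $\varepsilon\int u^{q+1}\eta^m$ contribution is absorbed into the left-hand side and the $\int v^{p+1}\eta^m$ contribution is collected on the right, the remaining universal constants coming from the Young estimates together with the fixed volume $|B_2|$ amalgamate into the $+1$ term in the conclusion.
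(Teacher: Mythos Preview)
Your starting identity is correct and coincides with the paper's. The gap is in the gradient error term. You propose to reduce $\int v\,\eta^{m-1}\nabla u\cdot\nabla\eta$ to a bound on $\int |\nabla u|^2$ with cutoff, and to obtain the latter by testing $-\Delta u=v^p$ against $u\eta^m$. That energy identity produces $\int uv^p\eta^m$, which you say is ``handled by Young combined with Lemma~\ref{component-comparison}.'' This step fails when $p>q$: Young with exponents $(q{+}1,\tfrac{q+1}{q})$ leaves $v^{p(q+1)/q}$ with $p(q+1)/q>p+1$, while Lemma~\ref{component-comparison} gives only $v\le C u^{(q+1)/(p+1)}$, which pushes the $u$-power in $uv^p$ up to $(pq+2p+1)/(p+1)>q+1$. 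In scaling language, $uv^p$ carries weight $2\alpha+2$, strictly larger than $\alpha(q+1)=\beta(p+1)=\alpha+\beta+2$ whenever $p>q$, so no combination of these two tools can bound $\int uv^p\eta^m$ by $\varepsilon\int u^{q+1}\eta^m+C\int v^{p+1}\eta^m+C$.

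The paper's remedy is to test $-\Delta u=v^p$ against $u^{\beta/\alpha}\eta^m$ rather than $u\eta^m$, with $\beta/\alpha=(q+1)/(p+1)\le 1$. This yields the \emph{weighted} Dirichlet integral $\int u^{\beta/\alpha-1}|\nabla u|^2\eta^m$ together with the cross term $\int u^{\beta/\alpha}v^p\eta^m$; the latter, by H\"older with exponents $(p{+}1,\tfrac{p+1}{p})$, is exactly $(\int u^{q+1}\eta^m)^{1/(p+1)}(\int v^{p+1}\eta^m)^{p/(p+1)}$ and hence absorbable. The gradient error is then handled by first replacing $v$ with $Cu^{\beta/\alpha}$ via Lemma~\ref{component-comparison}, splitting $u^{\beta/\alpha}|\nabla u|=u^{\frac12(1+\beta/\alpha)}\cdot u^{\frac12(\beta/\alpha-1)}|\nabla u|$, and applying a three-factor H\"older whose third factor is $(\int\eta^{m-(\alpha+\beta+2)})^{1/(\alpha+\beta+2)}$. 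This three-factor H\"older---not a two-term Young---is also how the cutoff-power mismatch you flagged for the $uv$ term is actually resolved, and it is precisely where the hypothesis $m>\alpha+\beta+2$ enters.
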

	\begin{proof}
		We may assume that $p>q$. Otherwise, by using Lemma \ref{component-comparison},  we have $u=v$ and hence obtain this lemma.
		
		Let $\varphi=\eta^m$ and a direct computation yields that
		\begin{equation}\label{uniform-estimates-error2}
			|\nabla\varphi|\leq 2m\varphi^{1-\frac{1}{m}}~~\mbox{and}~~|\Delta\varphi|\leq 4m^2\varphi^{1-\frac{2}{m}}~~\mbox{in}~\mathbb{R}^n.
		\end{equation}\par
		
		Since $(u,v)$ is a pair of solutions to the system \eqref{lesys}, we have
		\begin{equation}\label{uniform-estimates-error3}
			\begin{aligned}
				\int_{\mathbb{R}^n} u^{\frac{\beta}{\alpha}-1}|\nabla u|^2\varphi&=\frac{\alpha}{\beta}\int_{\mathbb{R}^n} \nabla u\cdot\nabla\big(u^{\frac{\beta}{\alpha}}\varphi\big)-\frac{\alpha}{\beta}\int_{\mathbb{R}^n}u^{\frac{\beta}{\alpha}} \nabla u\cdot\nabla\varphi\\
				&=\frac{\alpha}{\beta}\int_{\mathbb{R}^n}u^{\frac{\beta}{\alpha}}v^p\varphi+\frac{\alpha^2}{\beta(\alpha+\beta)}
				\int_{\mathbb{R}^n}u^{\frac{\beta+\alpha}{\alpha}}\Delta\varphi
			\end{aligned}
		\end{equation}
		where $\alpha,\beta$ come from \eqref{alphabeta}.
		Meanwhile, using integration by parts, one has
		\begin{equation}\label{uniform-estimates-error4}
			\begin{aligned}
				\int_{\mathbb{R}^n} u^{q+1}\varphi&=\int_{\mathbb{R}^n} \nabla v\cdot\nabla u\varphi+\int_{\mathbb{R}^n}u \nabla v\cdot\nabla\varphi\\
				&=\int_{\mathbb{R}^n} v^{p+1}\varphi+\int_{\mathbb{R}^N}u \nabla v\cdot\nabla\varphi-\int_{\mathbb{R}^n}v \nabla u\cdot\nabla\varphi\\
				&=\int_{\mathbb{R}^n} v^{p+1}\varphi+\int_{\mathbb{R}^n}\nabla (uv)\cdot\nabla\varphi-2\int_{\mathbb{R}^n}v\nabla u\cdot\nabla\varphi\\
				&=\int_{\mathbb{R}^n} v^{p+1}\varphi-\int_{\mathbb{R}^n}uv\Delta\varphi-2\int_{\mathbb{R}^n}v\nabla u\cdot\nabla\varphi.
			\end{aligned}
		\end{equation}
		Therefore, by using \eqref{uniform-estimates-error2}, \eqref{uniform-estimates-error3}, \eqref{uniform-estimates-error4}, Lemma \ref{component-comparison}, H\"{o}lder's  inequality and Young's inequality,  we have
		\begin{equation*}
			\begin{aligned}
				\int_{\mathbb{R}^n} u^{\frac{\beta}{\alpha}-1}|\nabla u|^2\varphi&\leq\frac{\alpha}{\beta}\int_{\mathbb{R}^n}u^{\frac{q+1}{p+1}}v^p\varphi+\frac{\alpha^2C_1}{\beta(\alpha+\beta)}
				\int_{\mathbb{R}^n}u^{\frac{\beta+\alpha}{\alpha}}\varphi^{1-\frac{2}{m}}\\
				&\leq\frac{\alpha}{\beta}\big(\int_{\mathbb{R}^n} u^{q+1}\varphi\big)^{\frac{1}{p+1}}\big(\int_{\mathbb{R}^n} v^{p+1}\varphi\big)^{\frac{p}{p+1}}\\
				&~~~~~~~~~+\frac{\alpha^2C_1}{\beta(\alpha+\beta)}\big(\int_{\mathbb{R}^n} u^{q+1}\varphi\big)^{\frac{\alpha+\beta}{\alpha+\beta+2}}\big(\int_{\mathbb{R}^n} \varphi^{1-\frac{\alpha+\beta+2}{m}}\big)^{\frac{2}{\alpha+\beta+2}}\\
				&\leq \frac{1}{2}\int_{\mathbb{R}^n} u^{q+1}\varphi+C_2\big(\int_{\mathbb{R}^n} v^{p+1}\varphi+1\big)
			\end{aligned}
		\end{equation*}
		and
		\begin{equation*}
			\begin{aligned}
				\int_{\mathbb{R}^n} u^{q+1}\varphi&\leq\int_{\mathbb{R}^n} v^{p+1}\varphi+C_1\int_{\mathbb{R}^n}uv\varphi^{1-\frac{2}{m}}+C_2\int_{\mathbb{R}^n}u^{\frac{1}{2}(1+\frac{\beta}{\alpha})}u^{\frac{1}{2}(\frac{\beta}{\alpha}-1)} |\nabla u| \varphi^{1-\frac{1}{m}}\\
				&\leq\int_{\mathbb{R}^n} v^{p+1}\varphi+\big(\int_{\mathbb{R}^n} u^{q+1}\varphi\big)^{\frac{1}{q+1}}\big(\int_{\mathbb{R}^n} v^{p+1}\varphi\big)^{\frac{1}{p+1}}\big(\int_{\mathbb{R}^n} \varphi^{1-\frac{\alpha+\beta+2}{m}}\big)^{\frac{2}{\alpha+\beta+2}}\\
				&~~+C_2\big(\int_{\mathbb{R}^n} u^{q+1}\varphi\big)^{\frac{\alpha+\beta}{2(\alpha+\beta+2)}}\big(\int_{\mathbb{R}^n} u^{\frac{\beta}{\alpha}-1}|\nabla u|^2\varphi\big)^{\frac{1}{2}}\big(\int_{\mathbb{R}^n} \varphi^{1-\frac{\alpha+\beta+2}{m}}\big)^{\frac{1}{\alpha+\beta+2}}\\
				&\leq \frac{1}{4}\int_{\mathbb{R}^n} u^{q+1}\varphi+\frac{1}{2}\int_{\mathbb{R}^n}u^{\frac{\beta}{\alpha}-1}|\nabla u|^2\varphi+C_3\big(\int_{\mathbb{R}^n} v^{p+1}\varphi+1\big)\\
				&\leq\frac{1}{2}\int_{\mathbb{R}^n} u^{q+1}\varphi+C_4\big(\int_{\mathbb{R}^n} v^{p+1}\varphi+1\big),
			\end{aligned}
		\end{equation*}
		where $C_1$, $C_2$, $C_3$ and $C_4$ are positive constants depending only on $p$, $q$, $m$ and $n$.
		
		Thus, we finish the proof of  this lemma.
	\end{proof}

	Now, we are ready to estimate  $I_7$ and $I_8$. From now on, we assume that $(u,v)$ is a pair of positive solutions to the  system \eqref{lesys}. In order to balance the two equations in \eqref{estimates-theorem1}, we assume that
	$r$ and $s$ satisfies the following crucial  relationship
	\begin{equation}\label{r-s-relation}
		\frac{r+2}{q+1}=\frac{s+2}{p+1}.
	\end{equation}
	By  a direct calculation and \eqref{alphabeta}, we deduce that relation \eqref{r-s-relation} is equivalent to
	\begin{equation*}
		(r+2)\alpha=(s+2)\beta
	\end{equation*}
	when $pq\not=1$.
	\begin{lemma}\label{estimates-error}
		Suppose that $p\geq q\geq1$ with $pq>1$.  If $\varphi=\psi=\eta^m$ with $m\in\mathbb{N}^+$ and \eqref{r-s-relation} holds with $-2< r\leq q-1$ and  $-2<s\leq p-1$, then there exits positive constant $m_0$ depending on on $p$, $q$ and $r$ such that for any $m>m_0$ and any $\varepsilon>0$ we have
		\begin{equation}\label{estimates-error1}
			\begin{aligned}
				|I_7|\leq &\varepsilon\int_{\mathbb{R}^n}\big(u^{r-2}|\nabla u|^4+u^{2q}v^s\big)\varphi+C\\
				|I_8|\leq & \varepsilon\int_{\mathbb{R}^n}\big( v^{s-2}|\nabla v|^4+u^{2q}v^s\big)\varphi+C
			\end{aligned}
		\end{equation}
		where $C$ is  a positive constant which depends on $p$, $q$, $k_1$, $k_2$, $\theta_1$, $\theta_2$, $r$, $s$,  $m$, $\varepsilon$ and $n$.
	\end{lemma}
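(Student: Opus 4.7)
The plan is to estimate each of the four integrals comprising $I_7$ separately; the bound on $|I_8|$ then follows by the symmetric argument under the substitution $(u,p,r,k_1,\theta_1)\leftrightarrow(v,q,s,k_2,\theta_2)$. First, the cutoff derivative estimates $|\nabla\eta^m|\leq 2m\eta^{m-1}$ and $|\Delta\eta^m|\leq 4m^2\eta^{m-2}$ reduce every term of $I_7$ to an absolute integral of the form $\int u^a|\nabla u|^b v^c\eta^{m-k}$ with $k\in\{1,2\}$.

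For each of the three terms containing a factor $|\nabla u|$, I would apply H\"older's inequality with exponents chosen to isolate the first good term $\int u^{r-2}|\nabla u|^4\eta^m$. The natural pairings are $(2,2)$ for $\int u^r|\nabla u|^2\Delta\varphi$ (splitting $\eta^{m-2}=\eta^{m/2}\cdot\eta^{(m-4)/2}$), $(4/3,4)$ for $\int u^{r-1}|\nabla u|^3\cdot\nabla\varphi$ (splitting $\eta^{m-1}=\eta^{3m/4}\cdot\eta^{(m-4)/4}$), and $(4,4/3)$ for $\int u^rv^p|\nabla u|\cdot\nabla\varphi$. Young's inequality then absorbs the $|\nabla u|^4$-factor into $\varepsilon\int u^{r-2}|\nabla u|^4\varphi$, leaving only a pure-monomial residual $\int u^Av^B\eta^{m-k'}$ with $k'\leq 4$. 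These residuals, together with the already-polynomial Term~4 $\int u^{r+1}v^p\Delta\varphi$, are then bounded by a multi-term pointwise Young's inequality dominating $u^Av^B$ by $\varepsilon\,u^{2q}v^s+C_\varepsilon(u^{q+1}+v^{p+1}+1)$; the hypotheses $-2<r\leq q-1$ and $-2<s\leq p-1$ are precisely what place each pair $(A,B)$ arising here inside the convex hull of $(2q,s)$, $(q+1,0)$, $(0,p+1)$, $(0,0)$, so this step is feasible.

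The main obstacle is controlling the residual monomial integrals $\int u^{q+1}\eta^{m-k'}$ and $\int v^{p+1}\eta^{m-k'}$. Taking $m_0>\alpha+\beta+6$ guarantees that every shifted cutoff $\eta^{m-k'}$ with $k'\leq 4$ still satisfies the hypothesis $m'>\alpha+\beta+2$ of Lemma~\ref{uniform-estimates-error}, which comparably bounds $\int u^{q+1}\eta^{m'}$ by $\int v^{p+1}\eta^{m'}+1$; the reverse comparison comes from the component-comparison Lemma~\ref{component-comparison}. The scaling relation~\eqref{r-s-relation} is what makes the whole scheme close: it forces the two good integrals $\int u^{r-2}|\nabla u|^4\varphi$ and $\int u^{2q}v^s\varphi$ to share a common homogeneity under the Lane--Emden rescaling $u\mapsto R^\alpha u(R\cdot)$, $v\mapsto R^\beta v(R\cdot)$, so that the monomial residuals, carrying strictly lower cutoff exponents and hence strictly lower weight, are absorbed into $\varepsilon$ times these good integrals modulo a universal constant depending only on $p,q,k_1,k_2,\theta_1,\theta_2,r,s,m,\varepsilon,n$ as stated.
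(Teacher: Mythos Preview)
Your H\"older splittings for the three gradient terms match the paper's exactly, and the overall architecture (peel off $\int u^{r-2}|\nabla u|^4\varphi$, then control the pure-monomial residuals) is right. But the treatment of the residuals has a genuine gap. The convex-hull claim is false for the Term~3 residual $u^{r+2/3}v^{4p/3}$: take $p=q=2$, $r=s=0$ (all hypotheses satisfied). The point $(2/3,\,8/3)$ lies strictly outside the convex hull of $(4,0),(3,0),(0,3),(0,0)$, and correspondingly no pointwise bound of the form $u^{2/3}v^{8/3}\le \varepsilon u^4+C_\varepsilon(u^3+v^3+1)$ can hold (let $v=u^{4/3}$ and send $u\to\infty$). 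So that step fails outright.

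There is a second gap even where the convex-hull bound does hold: after reducing to $\int u^{q+1}\eta^{m-k'}$ and $\int v^{p+1}\eta^{m-k'}$, you establish only that these two integrals are comparable to one another, never that they are $\le\varepsilon\int u^{2q}v^s\varphi+C$. Invoking ``strictly lower cutoff exponents and hence strictly lower weight'' is a heuristic, not an inequality; the drop in the power of $\eta$ does nothing by itself to force absorption. What the paper does instead is bypass the convex-hull decomposition entirely: using Lemma~\ref{component-comparison} (which is equivalent to $v^{(p+1)/(q+1)}\le Cu$) together with the relation \eqref{r-s-relation}, one obtains direct pointwise bounds of the form $u^{r+2/3}v^{4p/3}\le C(u^{2q}v^s)^{\bar\vartheta}$ and $v^{p+1}\le C(u^{2q}v^s)^{\nu_2}$ with explicit exponents $\bar\vartheta,\nu_2\in(0,1)$ (and, when $s>0$, an integrated version for $u^{r+2}$ via Lemma~\ref{uniform-estimates-error}). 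The sub-unity exponent is exactly what makes Young's inequality produce $\varepsilon\int u^{2q}v^s\varphi+C$ rather than merely a comparable term, and the threshold $m_0$ must also absorb the extra factor $\tfrac{4}{1-\vartheta}$ coming from this step. Replacing your convex-hull step by this pointwise comparison to a power of $u^{2q}v^s$ is the missing ingredient.
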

	\begin{proof}
		During  the proof of this lemma, if a positive constant depends only on $p$, $q$, $k_1$, $k_2$, $r$, $m$, $\varepsilon$ and $n$, we will denote it by $C$ for brevity.

		For any $-2<s\leq p-1$, by using Lemma \ref{component-comparison}, one has
		\begin{equation*}
			v^{s+2}\leq C (v^su^{2q})^{\vartheta_1}
		\end{equation*}
		where 	$\vartheta_1=\frac{(r+2)}{(r+2)+\frac{4}{\alpha}}\in(0,1).$
		As a direct consequence, one has
		\begin{equation}\label{estimates-error3}
			\int_{\mathbb{R}^n}v^{s+2}\eta^m\leq C\int_{\mathbb{R}^n} (v^su^{2q})^{\vartheta_1}\eta^m+C,
		\end{equation}
		
		Now, we claim that 	
		\begin{equation}\label{estimates-error2}
			\int_{\mathbb{R}^n}u^{r+2}\eta^m\leq C\int_{\mathbb{R}^n} (v^su^{2q})^{\vartheta_2}\eta^m+C,
		\end{equation}
		for some $\vartheta_2\in (0,1)$.
		
		Firstly, when $s\in(-2,0]$, by using \eqref{r-s-relation} and Lemma \ref{component-comparison},  we have
		\begin{equation*}
			u^{r+2}\leq C (v^su^{2q})^{\vartheta_1}
		\end{equation*}
		which deduces \eqref{estimates-error2} by choosing $\vartheta_2=\vartheta_1$.
		Secondly, if $0< s\leq p-1$, the situation is different.  By using \eqref{r-s-relation}, one has
		\begin{equation}\label{r<q-1}
			-\frac{2(p-q)}{p+1}<r\leq q-1.
		\end{equation}
		Then, 	one may easily check that  $\frac{q+1}{r+2+\frac{4}{\alpha}}\in(0,1)$ by using \eqref{r<q-1}.
		With the help of Lemma \ref{component-comparison}, we deduce that
		\begin{equation*}
			u^{r+2}=u^{\nu_1(q+1)}, \quad v^{p+1}\leq C (v^su^{2q})^{\nu_2}~~\mbox{in}~\mathbb{R}^n.
		\end{equation*}
		where $\nu_1=\frac{r+2}{q+1}\in(0,1]$ and  $\nu_2=\frac{q+1}{r+2+\frac{4}{\alpha}}\in(0,1)$.
		By using Lemma \ref{uniform-estimates-error} and Young's  inequality, we have
		\begin{equation*}
			\begin{aligned}
				\int_{\mathbb{R}^n}u^{r+2}\eta^m&=\int_{\mathbb{R}^n} u^{\nu_1(q+1)}\eta^m\leq C\int_{\mathbb{R}^n} u^{q+1}\eta^m+C\\
				&\leq C\int_{\mathbb{R}^n} v^{p+1}\eta^m+C\leq C\int_{\mathbb{R}^n} (v^su^{2q})^{\nu_2}\eta^m+C,
			\end{aligned}
		\end{equation*}
		which implies \eqref{estimates-error2} by choosing $\vartheta_2=\nu_2.$

		Now, we turn  to  prove \eqref{estimates-error1}.
		By using  Lemma \ref{component-comparison} and \eqref{r-s-relation}, we have
		\begin{equation*}
			u^{r+\frac{2}{3}}v^{\frac{4p}{3}}\leq C (v^su^{2q})^{\overline{\vartheta}}~~\mbox{and}~~u^{\frac{4q}{3}}v^{s+\frac{2}{3}}\leq C (v^su^{2q})^{\overline{\vartheta}}~~\mbox{in}~\mathbb{R}^n
		\end{equation*}
		with $\overline{\vartheta}=\frac{(r+2)\alpha+\frac{8}{3}}{(r+2)\alpha+4}\in(0,1)$.
		Hence,  by \eqref{uniform-estimates-error2}, \eqref{estimates-error2}, \eqref{estimates-error3}, Young's inequality and H\"older's inequality, we have
		\begin{equation}\label{estimates-error8}
			\begin{aligned}
				|\int_{\mathbb{R}^n} u^r|\nabla u|^2\Delta\varphi|&\leq C\int_{\mathbb{R}^n} u^r|\nabla u|^2\varphi^{1-\frac{2}{m}}\\
				&\leq C\int_{\mathbb{R}^n} \big(u^{r-2}|\nabla u|^4\varphi\big)^{\frac{1}{2}}\big(u^{r+2}\varphi^{1-\frac{4}{m}}\big)^{\frac{1}{2}}\\
				&\leq\varepsilon\int_{\mathbb{R}^n}u^{r-2}|\nabla u|^4 \varphi+C\int_{\mathbb{R}^n}u^{r+2}\eta^{m-4}\\
				&\leq\varepsilon\int_{\mathbb{R}^n}u^{r-2}|\nabla u|^4 \varphi+ C\int_{\mathbb{R}^n} (v^su^{2q})^{\vartheta_2}\eta^{m-4}+C\\
				&=\varepsilon\int_{\mathbb{R}^n}u^{r-2}|\nabla u|^4 \varphi+ C\int_{\mathbb{R}^n} (v^su^{2q}\varphi)^{\vartheta_2}\varphi^{1-\vartheta_2-\frac{4}{m}}+C\\
				&\leq \varepsilon\int_{\mathbb{R}^n}u^{r-2}|\nabla u|^4 \varphi+ \varepsilon\int_{\mathbb{R}^n}v^su^{2q}\varphi+\int_{\mathbb{R}^n} \varphi^{1-\frac{4}{m(1-\vartheta_2)}}+C\\
				&\leq\varepsilon\int_{\mathbb{R}^n}u^{r-2}|\nabla u|^4 \varphi+ \varepsilon\int_{\mathbb{R}^n}v^su^{2q}\varphi+C,
			\end{aligned}
		\end{equation}

		\begin{equation}\label{estimates-error9}
			\begin{aligned}
				|\int_{\mathbb{R}^n} u^{r-1}|\nabla u|^2\nabla u\nabla\varphi|&\leq C\int_{\mathbb{R}^N} u^{r-1}|\nabla u|^3\varphi^{1-\frac{1}{m}}\\
				&\leq C\int_{\mathbb{R}^n} \big(u^{r-2}|\nabla u|^4\varphi\big)^{\frac{3}{4}}\big(u^{r+2}\varphi^{1-\frac{4}{m}}\big)^{\frac{1}{4}}\\
				&\leq\varepsilon\int_{\mathbb{R}^n}u^{r-2}|\nabla u|^4 \varphi+C\int_{\mathbb{R}^n}u^{r+2}\eta^{m-4}\\
				&\leq\varepsilon\int_{\mathbb{R}^n}u^{r-2}|\nabla u|^4 \varphi+ \varepsilon\int_{\mathbb{R}^n}v^su^{2q}\varphi+\int_{\mathbb{R}^N} \varphi^{1-\frac{4}{m(1-\vartheta_2)}}+C\\
				&\leq\varepsilon\int_{\mathbb{R}^n}u^{r-2}|\nabla u|^4 \varphi+ \varepsilon\int_{\mathbb{R}^n}v^su^{2q}\varphi+C,
			\end{aligned}
		\end{equation}

		\begin{equation}\label{estimates-error10}
			\begin{aligned}
				|\int_{\mathbb{R}^n} u^rv^p\nabla u\nabla\varphi|&\leq C\int_{\mathbb{R}^n} u^rv^p|\nabla u|\varphi^{1-\frac{1}{m}}\\
				&\leq C\int_{\mathbb{R}^n} \big(u^{r-2}|\nabla u|^4\varphi\big)^{\frac{1}{4}}\big(v^su^{2q}\varphi\big)^{\frac{3\overline{\vartheta}}{4}}\varphi^{1-\frac{1}{m}-\frac{1}{4}-\frac{3\overline{\vartheta}}{4}}\\
				&\leq C\big(\int_{\mathbb{R}^n} u^{r-2}|\nabla u|^4\varphi\big)^{\frac{1}{4}}\big(\int_{\mathbb{R}^n} v^su^{2q}\varphi\big)^{\frac{3\overline{\vartheta}}{4}}\big(\int_{\mathbb{R}^n}\varphi^{1-\frac{4}{3m(1-\overline{\vartheta})}}\big)^{\frac{3(1-\overline{\vartheta})}{4}}\\
				&\leq\varepsilon\int_{\mathbb{R}^n}v^{s-2}|\nabla v|^4 \varphi+\varepsilon\int_{\mathbb{R}^n}u^{2q}v^s\varphi+C
			\end{aligned}
		\end{equation}
		
		and
		\begin{equation}\label{estimates-error11}
			\begin{aligned}
				|\int_{\mathbb{R}^n} u^{r+1}v^p\Delta\varphi|&\leq C\int_{\mathbb{R}^n} u^{\frac{r+2}{2}}\big(u^{\frac{r}{2}}v^p\varphi^{\frac{1}{2}}\big)\varphi^{\frac{1}{2}-\frac{2}{m}}\\
				&=\int_{\mathbb{R}^n} \big(u^rv^{2p}\big)^{\frac{1}{2}}\big(u^{r+2}\varphi^{1-\frac{4}{m}}\big)^{\frac{1}{2}}\\
				&\leq\varepsilon\int_{\mathbb{R}^n}u^{r-2}|\nabla u|^4 \varphi+ \varepsilon\int_{\mathbb{R}^n}v^su^{2q}\varphi+\int_{\mathbb{R}^N} \varphi^{1-\frac{4}{m(1-\vartheta_2)}}+C\\
				&\leq\varepsilon\int_{\mathbb{R}^n}u^{r-2}|\nabla u|^4 \varphi+ \varepsilon\int_{\mathbb{R}^n}v^su^{2q}\varphi+C,
			\end{aligned}
		\end{equation}
		provided $m>m_0=\max\{\alpha+\beta+6, \frac{4}{1-\vartheta_2},\frac{4}{3(1-\overline{\vartheta})}\}>0$. Therefore, by \eqref{estimates-error8}-\eqref{estimates-error11}, we have
		\begin{equation*}
			|I_7|\leq\varepsilon\int_{\mathbb{R}^n}\big(u^{r-2}|\nabla u|^4+u^{2q}v^s\big)\varphi+C.
		\end{equation*}
		
		Similarly, we have
		\begin{equation*}
			|I_8|\leq\varepsilon\int_{\mathbb{R}^n}\big(v^{s-2}|\nabla v|^4+u^{2q}v^{s}\big)\varphi+C.
		\end{equation*}
		Thus, we finish the proof of this lemma.
	\end{proof}

	\section{Choice of coefficients}\label{sec:choice}

	Our current objective is to select appropriate values for $r,s,k_1,k_2,\theta_1,\theta_2$  in  Theorem \ref{estimate-theorem} with the aim of ensuring the following positivity conditions hold:
	\begin{equation}\label{beta1beta2>gamma1gamma2}
		\alpha_1>0,\; \alpha_2>0, \;\beta_1>0, \; \beta_2>0,\; \gamma_1>0, \; \gamma_2>0,\; \beta_1\beta_2-\gamma_1\gamma_2>0.
	\end{equation}

	To serve this aim,  $p$ and $q$ are parameterized by $d_1$ and $d_2$ through the following system:
	\begin{equation}\label{pqd1d2}
		\begin{cases}
			\dfrac{1}{p+1} = \dfrac{1}{2} - \dfrac{d_1}{n}, \\[8pt]
			\dfrac{1}{q+1} = \dfrac{1}{2} - \dfrac{d_2}{n}.
		\end{cases}
	\end{equation}
	Then the restrictions  $p\geq 1, q\geq 1$ and \eqref{subcri} is  equivalent to
	\begin{equation*}
		d_1\geq 0, \quad d_2\geq 0,\quad  d_1+d_2<2.
	\end{equation*}

	When $d_1=d_2$, Theorem \ref{mainthm} holds by using 	Lemma \ref{component-comparison} and the classical result of Gidas and Spruck in \cite{GS}. Hence, we suppose that $$d_1>d_2$$ without loss of generality.
	
	Choose
	\begin{equation}\label{rs k1 k2choice}
		\left\{
		\begin{aligned}
			r=&-\frac{3(d_1-d_2)}{n-2d_2}, \quad s=\frac{d_1-d_2}{n-2d_1}\\
			k_1=&\frac{n}{2(n-1)}\left(r-2+\frac{r}{n}+\frac{r^2}{2}\right),\\
			k_2=&\frac{n}{2(n-1)}\left(s-2+\frac{s}{n}+\frac{s^2}{2}\right)\\
			\theta_1=&\frac{\mu(r)-r}{p}, \quad \theta_2=\frac{\mu(s)-s}{q}
		\end{aligned}\right.
	\end{equation}
	where   the function $\mu(x)$ is given by
	$$\mu(x):=\frac{(n+2)n+(n^2-3n-1)x-\frac{n(n+2)}{4}x^2}{(n-1)^2}.$$

	\begin{lemma}\label{lem:crucial estimate}
		Choosing  \eqref{rs k1 k2choice} in Theorem \ref{estimate-theorem}, for any constant $c_0>2$, there exists  a positive  integer $N(c_0)$ such that, for any  $n\geq N(c_0)$ and
		\begin{equation}\label{d1d2range}
			0\leq d_2<d_1\leq 2-d_2-\frac{c_0}{n},
		\end{equation}
		then the positivity condition  \eqref{beta1beta2>gamma1gamma2} holds.
	\end{lemma}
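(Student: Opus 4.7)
The plan is to convert the entire statement into an asymptotic calculation in $1/n$. After substituting \eqref{pqd1d2} and \eqref{rs k1 k2choice}, every quantity $\alpha_i, \beta_i, \gamma_i, \delta_i$ appearing in Theorem \ref{estimate-theorem} becomes an explicit rational function of $d_1, d_2, 1/n$ on the admissible region \eqref{d1d2range}. The basic scaling is that $r = -3(d_1-d_2)/(n-2d_2)$ and $s = (d_1-d_2)/(n-2d_1)$ are both $O(1/n)$, that $p, q \to 1$, that $k_1, k_2 \to -1$, and that $\theta_1, \theta_2 \to 1$ as $n \to \infty$. In this regime the easy positivities $\alpha_i > 0$ and $\gamma_i > 0$ follow from a direct Taylor expansion: $\alpha_i$ tends to a strictly positive constant, while the factor $\tfrac{3}{2}r - \tfrac{n+2}{n}k_i$ appearing in $\gamma_i$ tends to $1$, and the denominators $\delta_i$ from \eqref{delta1} and \eqref{delta2} can be shown to be positive and of size $\Theta(1/n)$.

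For $\beta_i$ one must proceed more carefully, since $\delta_i$ in the denominator is of order $1/n$; consequently both $\beta_i$ and $\gamma_i$ are of order $n$, and the key inequality $\beta_1\beta_2 - \gamma_1\gamma_2 > 0$ is a near-cancellation in a quantity of size $n^2$. I would first verify that each $\beta_i$ is positive of order $n$: this reduces to checking that $\bigl(\tfrac{3}{2}r - \tfrac{n+2}{n}k_1\bigr)\bigl(2\theta_1(p-1)+r+1\bigr)/\delta_1$ has the right sign and magnitude to overwhelm the $-(n-1)/n$ deficit, and symmetrically for $\beta_2$. Since $2\theta_i(p-1)+r+1 \to 1$, this is again a leading-order sign check.

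The main obstacle, and the reason for the particular rational function $\mu(x)$ chosen in \eqref{rs k1 k2choice}, is the final inequality $\beta_1\beta_2 > \gamma_1\gamma_2$. My expectation is that $\mu$ is engineered so that on the critical Sobolev locus $d_1+d_2 = 2$ one obtains $\beta_1\beta_2 - \gamma_1\gamma_2 = 0$ to leading order, while away from this locus the difference is strictly positive; note that $d_1+d_2 = 2$ is precisely the critical condition $\alpha+\beta = n-2$ after translation through \eqref{alphabeta} and \eqref{pqd1d2}. To quantify this, I would introduce the slack parameter $t := 2 - d_1 - d_2 \geq c_0/n$ and expand $\beta_1\beta_2 - \gamma_1\gamma_2$ as a polynomial in $t$, $d_1-d_2$, and $1/n$; the leading non-vanishing part should take the form $At + B/n$, where $A$ is a positive coefficient bounded away from zero on \eqref{d1d2range} and $B$ is uniformly bounded. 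Then the hypothesis $t \geq c_0/n$ with $c_0 > 2$ gives $At + B/n \geq (Ac_0 + B)/n$, which is strictly positive once $n \geq N(c_0)$ is taken large enough (depending on the bound for $B$). This algebraic step—identifying the cancellation at $d_1+d_2=2$ and verifying that the slope in $t$ is strictly positive throughout \eqref{d1d2range}—is the single point where explicit symbolic computation is essentially unavoidable, and it is where the hypothesis $c_0 > 2$ enters the argument.
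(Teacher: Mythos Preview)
Your asymptotic strategy with slack $t = 2 - d_1 - d_2$ is indeed the paper's approach for the main regime, but there is a real gap. After the reduction $\beta_1\beta_2 > \gamma_1\gamma_2 \Longleftrightarrow A_1A_2 > pq$ (with $A_1 = p\beta_1/\gamma_1$, $A_2 = q\beta_2/\gamma_2$), the actual expansion reads
\[
A_1A_2 - pq \;=\; \frac{4(d_1+3d_2)\,t}{n^2} \;+\; \frac{P(d_1,d_2)}{n^3} \;+\; O(n^{-4}),
\]
so the coefficient you call $A$ is proportional to $d_1+3d_2$, which is \emph{not} bounded away from zero on \eqref{d1d2range}: it degenerates as $(d_1,d_2)\to(0,0)$. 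Since the $O(n^{-4})$ remainder carries no compensating factor of $d_1+3d_2$, no single $N(c_0)$ can make your inequality $At + B/n > 0$ hold uniformly. The paper therefore splits off the region $0 \le d_2 < d_1 \le \tfrac12$ and proves $A_1 > p$, $A_2 > q$ there by a direct, non-asymptotic comparison from the closed form \eqref{A1A2repre}; only in the complementary region $d_1 > \tfrac12$ (where $d_1+3d_2 > \tfrac12$) is the expansion invoked. There the paper also proves the sharp bound $P(d_1,d_2) \ge -8(d_1+3d_2)$, so the first two terms combine to $\tfrac{4(d_1+3d_2)}{n^2}\bigl(t - \tfrac{2}{n}\bigr)$; this is precisely why the threshold is $c_0 > 2$, a point your outline leaves vague.

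A smaller correction: $\alpha_i$ does not tend to a strictly positive constant. With $k_i \to -1$ and $r,s \to 0$ one gets $\alpha_i \to -(1)(1) + (-1)(-1) - 0 = 0$, so positivity of $\alpha_i$ is a higher-order issue. The paper handles it by reducing to the inequality $8n - 4 - 4nr - n^2r^2 > 0$ (and the analogous one in $s$), which does hold since $r,s = O(1/n)$.
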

	\begin{proof}
		
		Based on the choice of $r,s,k_1,k_2$ in \eqref{rs k1 k2choice}, one has
		\begin{equation*}
			\frac{n\left(\frac{3}{2}r-\frac{n+2}{n}k_1\right)}{(n-1)}=\mu(r), \quad \frac{n\left(\frac{3}{2}s-\frac{n+2}{n}k_2\right)}{(n-1)}=\mu(s).
		\end{equation*}
		Under  the restriction \eqref{d1d2range} and  the choice \eqref{rs k1 k2choice},
		one has
		\begin{equation}\label{rs-range1}
			-\frac{6}{n}<-\frac{3d_1}{n}\leq r<0, \quad 0<s\leq \frac{d_1}{n-2d_1}<\frac{2}{n-4}
		\end{equation}
		and then, for $n\geq 6$, there holds
		\begin{align*}
			\mu(r)&>\mu\left(-\frac{6}{n}\right)=\frac{n^3-4n^2+9n-12}{n(n-1)^2}>0,\\
			\mu(s)&>\mu(0)=\frac{n(n+2)}{(n-1)^2}>0.
		\end{align*}
		Thus, under the constraint \eqref{d1d2range} and $n\geq 6$, there holds
		\begin{equation}\label{mu(r)>0,mu(s)>0}
			\mu(r)>0,\quad \mu(s)>0.
		\end{equation}		
		Inserting the choice  $\theta_1,\theta_2$ from \eqref{rs k1 k2choice} into \eqref{delta1} and  \eqref{delta2}, one has
		\begin{equation}\label{delata1,delata2}
			\begin{aligned}
				\delta_1=&\frac{r^2}{p}+r+\frac{p-1}{p}\mu(r)^2,\\
				\delta_2=&\frac{s^2}{q}+s+\frac{q-1}{q}\mu(s)^2.
			\end{aligned}
		\end{equation}
		Then, by  using \eqref{delata1,delata2}, \eqref{rs-range1} and the condition $q\geq 1$, it is easy to see that  $$\delta_2>0.$$ To show $\delta_1>0$, we need some additional efforts. Firstly, for $n\geq 13$ and $0<d_1<2$,  one has
		$$-\frac{n+2d_1}{2(n-2d_1)}<-\frac{3d_1}{n}.$$
		Then,  using \eqref{rs-range1}, we have
		$$\frac{r^2}{p}+r> -\frac{3d_1}{n}+\frac{n-2d_1}{n+2d_1}\frac{9d_1^2}{n^2}=-\frac{3d_1(n^2-d_1n+6d_1^2)}{n^2(n+2d_1)}.$$
		Combining these estimates, one has
		\begin{align*}
			\delta_1=&\frac{r^2}{p}+r+\frac{p-1}{p}\mu(r)^2\\
			\geq &-\frac{3d_1(n^2-d_1n+6d_1^2)}{n^2(n+2d_1)}+\frac{4d_1}{n+2d_1}\mu\left(-\frac{6}{n}\right)^2\\
			=&\frac{d_1}{n+2d_1}\left(-3+\frac{3d_1(n-6d_1)}{n^2}+4\mu\left(-\frac{6}{n}\right)^2\right)
		\end{align*}
		Then, for $n\geq 13$ and $0<d_1<2$,
		a direct computation with the help of Mathematica yields that
		\begin{equation*}
			\mu\left(-\frac{6}{n}\right)^2>\frac{3}{4}, \quad n-6d_1>0.
		\end{equation*}
		Thus,  for $n\geq 13$, there holds
		$$\delta_1>0.$$
		Combining  with \eqref{mu(r)>0,mu(s)>0}, for $n\geq 13$, there holds
		\begin{equation}\label{gamma1>0}
			\gamma_1>0,\quad \gamma_2>0.
		\end{equation}
		
		For brevity, we	set the following  two notations
		\begin{equation}\label{A1 def}
			A_1:=\frac{p\beta_1}{\gamma_1}=2\theta_1(p-1)+r+1-\frac{n-1}{n}\frac{\delta_1}{\frac{3}{2}r-\frac{n+2}{n}k_1}
		\end{equation}
		and
		\begin{equation}\label{A2 def}
			A_2:=\frac{q\beta_2}{\gamma_2}=2\theta_2(q-1)+s+1-\frac{n-1}{n}\frac{\delta_2}{\frac{3}{2}s-\frac{n+2}{n}k_2}.
		\end{equation}
		To show $\beta_1>0$ and $\beta_2>0$ for $n\geq 13$, it is sufficient to show $A_1>0$ and $A_2>0$  with the help of \eqref{gamma1>0}.
		
		Inserting  $k_1,k_2,\theta_1,\theta_2$ from \eqref{rs k1 k2choice} into \eqref{A1 def} and \eqref{A2 def}, one has
		\begin{equation}\label{A1A2repre}
			\begin{aligned}
				A_1=&\left(1-\frac{r}{\mu(r)}\right)\left(\frac{p-1}{p}\mu(r)+\frac{r}{p}+1\right),\\
				A_2=&\left(1-\frac{s}{\mu(s)}\right)\left(\frac{q-1}{q}\mu(s)+\frac{s}{q}+1\right).
			\end{aligned}
		\end{equation}
		From \eqref{rs-range1} and \eqref{mu(r)>0,mu(s)>0},  for $n\geq 13$ and $p\geq 1$, there holds
		$$1-\frac{r}{\mu(r)}>0, \quad \frac{p-1}{p}\mu(r)+\frac{r}{p}+1\geq \frac{r}{p}+1>0$$			
		which yields that
		\begin{equation}\label{A_1>0}
			A_1>0.
		\end{equation}
		By using \eqref{rs-range1},  for $q\geq 1$ and $n\geq 13$, we have
		$$\frac{q-1}{q}\mu(s)+\frac{s}{q}+1>0, \quad 1-\frac{s}{\mu(s)}>1-\frac{2}{n-4}\cdot\frac{(n-1)^2}{n(n+2)}>0$$
		which yields that
		\begin{equation}\label{A_2>0}
			A_2>0.
		\end{equation}
		Combining \eqref{A_1>0}, \eqref{A_2>0} with \eqref{gamma1>0},  for $n\geq 13$, there holds
		$$\beta_1>0, \quad \beta_2>0.$$

		For $n\geq 13$, to show $\alpha_1>0$ and $\alpha_2>0$ in \eqref{alpha1 def} and \eqref{alpha2 def},  it is equivalent to show that
		\begin{equation}\label{k1range}
			-\sqrt{1-\frac{2r}{n}-\frac{(n-2)r^2}{n}}<\frac{2(n-1)}{n}k_1-r+1<\sqrt{1-\frac{2r}{n}-\frac{(n-2)r^2}{n}}
		\end{equation}
		and
		\begin{equation}\label{k2range}
			-\sqrt{1-\frac{2s}{n}-\frac{(n-2)s^2}{n}}<\frac{2(n-1)}{n}k_2-s+1<\sqrt{1-\frac{2s}{n}-\frac{(n-2)s^2}{n}}.
		\end{equation}
		Based on our choice \eqref{rs k1 k2choice}, the inequalities \eqref{k1range} and \eqref{k2range} can be transformed to
		\begin{equation}\label{k1range2}
			-\sqrt{1-\frac{2r}{n}-\frac{(n-2)r^2}{n}}<-1+\frac{r}{n}+\frac{r^2}{2}<\sqrt{1-\frac{2r}{n}-\frac{(n-2)r^2}{n}},
		\end{equation}
		and
		\begin{equation}\label{k2range2}
			-\sqrt{1-\frac{2s}{n}-\frac{(n-2)s^2}{n}}<-1+\frac{s}{n}+\frac{s^2}{2}<\sqrt{1-\frac{2s}{n}-\frac{(n-2)s^2}{n}}.
		\end{equation}
		For $n\geq 13$, using \eqref{rs-range1}, one can easily show that
		$$-1+\frac{r}{n}+\frac{r^2}{2}\leq 0,\quad -1+\frac{s}{n}+\frac{s^2}{2}\leq 0.$$
		Thus, for $n\geq 13$, to show \eqref{k1range2} and \eqref{k2range2}, we only need to show that
		\begin{align*}
			\left(1-\frac{2r}{n}-\frac{n-2}{n}r^2\right)-\left(-1+\frac{r}{n}+\frac{r^2}{2}\right)^2=&\frac{r^2(8n-4-4nr-n^2r^2)}{4n^2}>0,\\
			\left(1-\frac{2s}{n}-\frac{n-2}{n}s^2\right)-\left(-1+\frac{s}{n}+\frac{s^2}{2}\right)^2=&\frac{s^2(8n-4-4ns-n^2s^2)}{4n^2}>0.
		\end{align*}
		By using \eqref{rs-range1}, it is not hard to show that the above estimates holds when $n\geq 13$ via direct computations. Thus, for $n\geq 13$, there holds
		$$\alpha_1>0, \quad \alpha_2>0.$$

		From \eqref{A1 def} and \eqref{A2 def}, to show $\beta_1\beta_2>\gamma_1\gamma_2$ for large $n$, it is sufficient to show that
		\begin{equation*}
			A_1A_2-pq>0.
		\end{equation*}

		To address this situation, we partition the range \eqref{d1d2range} into two cases:
		\begin{enumerate}
			\item
			\begin{equation*}
				0\leq d_2<d_1\leq \frac{1}{2},
			\end{equation*}
			\item \begin{equation*}
				0\leq d_2<1,\quad \max\{d_2,\frac{1}{2}\}<d_1<2-d_2-\frac{c_0}{n}.
			\end{equation*}
		\end{enumerate}

		\hspace{3em}
		
		{\bf Case (1):  $	0\leq d_2<d_1\leq \frac{1}{2}.$}
		
		\hspace{3em}
		
		Using \eqref{A1A2repre},   a direct computation yields that
		\begin{align*}
			A_1-p=&\frac{\left(\mu(r)-p-r\right)\left((p-1)\mu(r)+r\right)}{\mu(r)p}\\
			A_2-q=&\frac{\left(\mu(s)-q-s\right)\left((q-1)\mu(s)+s\right)}{\mu(s)q}
		\end{align*}
		Set
		$$\xi(x):=\mu(x)-x=\frac{(n+2)(-nx^2-4x+4n)}{4(n-1)^2}$$
		
		When $0\leq d_2<d_2\leq\frac{1}{2}$, from \eqref{rs k1 k2choice},  one has
		$$-\frac{3}{2n}<r<0,\quad 0<s\leq \frac{1}{n-1}, \quad 1\leq p\leq \frac{n+1}{n-1}, \quad 1\leq q\leq \frac{n+1}{n-1}.$$
		Then,  for $n\geq 13$, one has
		$$
		\mu(r)-r-p=\xi(r)-p\geq \xi(0)-\frac{n+1}{n-1}=\frac{2n+1}{(n-1)^2}>0,
		$$
		and
		$$\mu(s)-s-q=\xi(s)-q\geq \xi\left(\frac{1}{n-1}\right)-\frac{n+1}{n-1}=\frac{8n^3-17n^2-6n+12}{4(n-1)^4}>0.$$
		For $n\geq 13$, a direct computation yields that
		$$\mu(r)\geq \mu\left(-\frac{3}{2n}\right)=\frac{16n^3+8n^2+63n+6}{16(n-1)^2n}>1.$$
		$$\mu(s)\geq\mu(0)=\frac{n(n+2)}{(n-1)^2}>0.$$
		Using the above  estimates,  for $n\geq 13$, one has
		$$\mu(r)(p-1)+r=\mu(r)\frac{4d_1}{n-2d_1}-\frac{3(d_1-d_2)}{n-2d_2}>\frac{d_1}{n}>0,$$
		and
		$$\mu(s)(q-1)+s>0.$$
		Thus, for $n\geq 13 $ and $0\leq d_2<d_1\leq \frac{1}{2}$, one has
		$$A_1>p,\quad A_2>q$$
		which yields that
		$$A_1A_2>pq.$$
		
		\hspace{3em}
		
		{\bf Case (2):  $0\leq d_2<1,\max\left\{d_2,\frac{1}{2}\right\}<d_1<2-d_2-\frac{c_0}{n}.$}
		
		\hspace{3em}
		
		With the help of the choice \eqref{rs k1 k2choice} and  Mathematica, we  obtain that
		\begin{align*}
			&A_1A_2-pq\\
			=&\frac{4(2-d_1-d_2)(d_1+3d_2)}{n^2}+\frac{P(d_1,d_2)}{n^3}\\
			&+\frac{\sum^{17}_{i=0}Q_i(d_1,d_2)n^i}{2(n-1)^4n^3(n-2d_1)^2(n+2d_1)(n-2d_2)^2(n+2d_2)A_3A_4}
		\end{align*}
		where
		\begin{equation}\label{A3def}
			\begin{aligned}
				A_3=&4n^4+4(2-3d_1-d_2)n^3\\
				&+(-44d_1+7d_1^2+12d_2+10d_1d_2-d_2^2)n^2\\
				&+(-4d_1+54d_1^2+4d_2-20d_1d_2-2d_2^2)n\\
				&+8d_1(d_1-d_2),
			\end{aligned}
		\end{equation}		
		\begin{equation}\label{A4def}
			\begin{aligned}
				A_4=&4n^4+4(2-3d_1-d_2)n^3\\
				&+(36d_1-9d_1^2-68d_2+42d_1d_2-17d_2^2)n^2\\
				&+(12d_1-18d_1^2-12d_2-36d_1d_2+86d_2^2)n\\
				&+24d_2(d_2-d_1)
			\end{aligned}
		\end{equation}
		and $E_i(d_1,d_2),F_i(d_1,d_2), P(d_1,d_2)$, $Q_i(d_1,d_2)$ are polynomials
		in $d_1,d_2$ with integer coefficients.
		The precise representation of $P(d_1,d_2)$ are given by
		\begin{align*}
			P(d_1,d_2)=&\frac{1}{2}\left(19d_{1}^{3} - d_{1}^{2}(92+125d_2)\right)\\
			&+\frac{1}{2}d_1(92+232d_2-127d_2^2)\\
			&+\frac{1}{2}d_2(20-12d_2-23d_2^2).
		\end{align*}
		which will be used later. More precise expressions for other polynomials are available but are too lengthy to include here; they can be generated using Mathematica if needed. To maintain conciseness, we omit them, noting that each is bounded by universal constants under the constraint $$0\leq d_2<d_1<2-d_2.$$

		Then, there exist a large  integer $n_1\geq 13$ such that for any $n\geq n_1$, there holds
		$$ 2(n-1)^4(n-2d_1)^2(n+2d_1)(n-2d_2)^2(n+2d_2)A_3A_4\geq 20n^{18}.$$
		One can also find  a positive constant $C_0$ such that, for $n\geq13$,
		$$\sum^{17}_{i=0}Q_i(d_1,d_2)n^i\geq -C_0\cdot n^{17}.$$
		
		Thus,  for $n\geq n_1$, one has
		$$\frac{\sum^{17}_{i=0}Q_i(d_1,d_2)n^i}{2(n-1)^4n^3(n-2d_1)^2(n+2d_1)(n-2d_2)^2(n+2d_2)A_3A_4}\geq -\frac{C_0}{20n^4}.$$
		With the help of Mathematica, one can show that
		$$P(d_1,d_2)+8(d_1+3d_2)\geq 0$$
		under the constraint
		$$0\leq d_2<d_1<2-d_2.$$
		Thus, for $n\geq n_1$ and $0\leq d_2<1,\max\left\{d_2,\frac{1}{2}\right\}<d_1<2-d_2-\frac{c_0}{n}$, there holds
		\begin{align*}
			A_1A_2-pq\geq &\frac{4(2-d_1-d_2)(d_1+3d_2)}{n^2}-\frac{8(d_1+3d_2)}{n^3}-\frac{C_0}{20n^4}\\
			=&\frac{4(d_1+3d_2)}{n^2}\left(2-d_1-d_2-\frac{2}{n}\right)-\frac{C_0}{20n^4}\\
			\geq&\frac{2(c_0-2)}{n^3}-\frac{C_0}{20n^4}.
		\end{align*}
		Finally, for any $c_0>2$, there exists an integer $N(c_0)\geq n_1$ such that, for any $n\geq N(c_0)$, one has
		$$A_1A_2-pq>0.$$
		
		Finally,  for any $n\geq N(c_0)$, the positivity condition \eqref{beta1beta2>gamma1gamma2} holds due to our choice of $N(c_0)$.
	\end{proof}

	\begin{theorem}\label{mainthm for c_0}
		Suppose  that $p\geq1$ and   $q\geq 1$.  For each positive constant $c_0>2$, there exists a large  integer $N(c_0)$, such that
		for any  $n\geq N(c_0)$ and
		\begin{equation}\label{new region}
			\quad \frac{1}{p+1}+\frac{1}{q+1}\geq 1-\frac{2}{n}+\frac{c_0}{n^2},
		\end{equation}
		then system \eqref{lesys} has no positive solutions.
	\end{theorem}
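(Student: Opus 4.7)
The plan is to argue by contradiction, combining the positivity package of Lemma \ref{lem:crucial estimate} with the scale invariance of the system. Suppose a positive classical solution $(u,v)$ of \eqref{lesys} exists. By the doubling-lemma reduction \cite{PQS} recalled in the introduction we may assume $(u,v)$ is bounded with $pq>1$; without loss of generality $p\geq q\geq 1$, so that $d_1\geq d_2\geq 0$ under \eqref{pqd1d2}. The diagonal case $d_1=d_2$ reduces via Lemma \ref{component-comparison} to the scalar Gidas--Spruck theorem, so we further assume $d_1>d_2$. The hypothesis \eqref{new region} translates to $d_1+d_2\leq 2-c_0/n$, and Lemma \ref{lem:crucial estimate} (enlarging $N(c_0)$ if necessary) delivers the full positivity package \eqref{beta1beta2>gamma1gamma2} for the canonical choice \eqref{rs k1 k2choice}. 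A direct check confirms that this choice also verifies \eqref{r-s-relation} together with $-2<r\leq q-1$ and $0<s\leq p-1$, so Lemma \ref{estimates-error} is applicable.

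For each $R>0$ the rescaled pair $(u_R,v_R):=(R^{\alpha}u(R\,\cdot),R^{\beta}v(R\,\cdot))$ is again a positive solution of \eqref{lesys}. I apply Theorem \ref{estimate-theorem} to $(u_R,v_R)$ with $\varphi=\psi=\eta^m$ supported in $B_2(0)$, drop the non-negative gradient integrals on the left (allowed since $\alpha_1,\alpha_2>0$), and absorb $I_7,I_8$ via Lemma \ref{estimates-error} with a small $\varepsilon>0$. Writing $X_R:=\int u_R^r v_R^{2p}\eta^m$ and $Y_R:=\int u_R^{2q}v_R^s\eta^m$, this produces
\begin{equation*}
\beta_1 X_R\leq \gamma_1\!\int\! u_R^{r+q+1}v_R^{p-1}\eta^m+\varepsilon Y_R+C,\qquad (\beta_2-\varepsilon)Y_R\leq \gamma_2\!\int\! u_R^{q-1}v_R^{p+s+1}\eta^m+C,
\end{equation*}
with $C$ independent of $R$.

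The algebraic heart of the argument is that \eqref{r-s-relation} forces both exponent vectors $(r+q+1,p-1)$ and $(q-1,p+s+1)$ to be convex combinations of $(r,2p)$ and $(2q,s)$; explicitly, with $\lambda:=\frac{3d_1+d_2}{2n+3d_1+d_2}\in(0,1)$ one has $(r+q+1,p-1)=\lambda(r,2p)+(1-\lambda)(2q,s)$ and $(q-1,p+s+1)=(1-\lambda)(r,2p)+\lambda(2q,s)$. H\"older's inequality then gives $\int u_R^{r+q+1}v_R^{p-1}\eta^m\leq X_R^{\lambda}Y_R^{1-\lambda}$ and $\int u_R^{q-1}v_R^{p+s+1}\eta^m\leq X_R^{1-\lambda}Y_R^{\lambda}$. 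Multiplying the two absorbed inequalities and using the elementary identity $X_R^\lambda Y_R^{1-\lambda}\cdot X_R^{1-\lambda}Y_R^\lambda=X_R Y_R$ yields, schematically,
\begin{equation*}
\beta_1(\beta_2-\varepsilon)X_R Y_R\leq \gamma_1\gamma_2 X_R Y_R+(\text{lower-order terms}),
\end{equation*}
where the lower-order terms are controlled by $C(X_R+Y_R+1)$ after weighted Young's inequality. The assumption $\beta_1\beta_2>\gamma_1\gamma_2$ then forces, for $\varepsilon$ small enough, the uniform bound $X_R+Y_R\leq C_\ast$ independent of $R$.

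Finally, undoing the scaling gives $X_R=R^{\alpha r+2p\beta-n}\int u^r v^{2p}\eta(\cdot/R)^m$, and a direct computation from \eqref{alphabeta} and \eqref{rs k1 k2choice} yields
\begin{equation*}
\alpha r+2p\beta-n=\frac{2n-(n-1)d_1-(n-3)d_2}{d_1+d_2}\geq \frac{2+c_0(1-1/n)}{d_1+d_2}>0
\end{equation*}
uniformly under $d_1+d_2\leq 2-c_0/n$. Hence $\int u^r v^{2p}\eta(\cdot/R)^m\leq C_\ast R^{n-\alpha r-2p\beta}\to 0$ as $R\to\infty$; monotone convergence forces $\int_{\mathbb{R}^n}u^r v^{2p}=0$, which contradicts the pointwise strict positivity of $u^r v^{2p}$ on $\mathbb{R}^n$. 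The step I expect to be the main obstacle is the bookkeeping in the third paragraph, specifically showing that the cross term $\varepsilon X_R^{1-\lambda}Y_R^{1+\lambda}$ produced by the $\varepsilon Y_R$ remainder in the first inequality is indeed dominated by $(\beta_1\beta_2-\gamma_1\gamma_2)X_R Y_R$; this requires extracting from the second inequality an a priori bound on the ratio $Y_R/X_R$ before the final absorption can be completed.
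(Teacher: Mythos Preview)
Your overall architecture coincides with the paper's: invoke Lemma~\ref{lem:crucial estimate} for the positivity package, feed Theorem~\ref{estimate-theorem} with $\varphi=\psi=\eta^m$, absorb $I_7,I_8$ via Lemma~\ref{estimates-error}, combine the two resulting inequalities under $\beta_1\beta_2>\gamma_1\gamma_2$, and conclude by scaling. Your side computations (the convex-combination parameter $\lambda$, the relation \eqref{r-s-relation}, the range of $r,s$, and the scaling exponent $\alpha r+2p\beta-n$) are all correct.

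The one substantive divergence is the step you yourself flag as the obstacle. You multiply the two inequalities first and must then absorb the degree-two cross term $\varepsilon\gamma_2 X_R^{1-\lambda}Y_R^{1+\lambda}$; this genuinely requires an a priori bound on $Y_R/X_R$, which can be extracted from the second inequality alone (if $Y_R$ is large then $(\beta_2-\varepsilon)Y_R^{1-\lambda}\leq \gamma_2 X_R^{1-\lambda}+C Y_R^{-\lambda}$, whence $Y_R\leq C'X_R+C''$), so your route can be completed, but you have not actually carried this out. The paper sidesteps the issue entirely by using the \emph{pointwise} comparison of Lemma~\ref{component-comparison}: since \eqref{r-s-relation} gives $u^{q-1}v^{p+s+1}=(v^{p+1}/u^{q+1})^{(s+2)/(p+1)}\,u^{r+q+1}v^{p-1}\leq C_1\,u^{r+q+1}v^{p-1}$, the second inequality lets one replace $\varepsilon Y_R$ by $\frac{\varepsilon\gamma_2 C_1}{\beta_2-\varepsilon}\int u^{r+q+1}v^{p-1}\varphi$ \emph{before} multiplying. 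All cross terms then stay linear, and the absorption reduces to checking $\beta_1(\beta_2-\varepsilon)>(\gamma_1+\frac{\varepsilon\gamma_2 C_1}{\beta_2-\varepsilon})\gamma_2$ for small $\varepsilon$. The paper uses Lemma~\ref{component-comparison} once more (via $u^{r+q+1}v^{p-1},\,u^{q-1}v^{p+s+1},\,u^rv^{2p}\leq C\,u^{2q}v^s$) to pass from a bound on the product $X_RY_R$ to a bound on $X_R$ alone, again simpler than the two-sided comparability your H\"older-only route would need.
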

	\begin{proof}
		Since $p\geq 1$ and $q\geq 1$, using \eqref{pqd1d2}, one has
		$$d_1\geq 0,\quad d_2\geq 0.$$
		The condition \eqref{new region} is equivalent to
		$$d_1+d_2<2-\frac{c_0}{n}.$$
		With out loss of generality, one  may assume that
		$d_1> d_2$ as we mentioned  before.

		Let $\varphi=\psi=\eta^m$ and choose   \eqref{rs k1 k2choice}  in Theorem \ref{estimate-theorem}, for $n\geq N(c_0)$,  the condition \eqref{beta1beta2>gamma1gamma2} holds by using Lemma \ref{lem:crucial estimate}.
		Meanwhile, it is not hard to check that
		\begin{equation}\label{rs-range}
			\frac{r+2}{q+1}=\frac{s+2}{p+1}
		\end{equation}
		from the choice that \eqref{rs k1 k2choice}. From \eqref{rs-range1}, for $n\geq N(c_0)$,  one has
		$$-1<r<0,\quad 0<s<p-1.$$

		Then, firstly, we 	let
		$0<\varepsilon<\min\{\alpha_1,\alpha_2\}$ small  enough to be chosen later  in Lemma \ref{estimates-error}.
		Using Theorem \ref{estimate-theorem} and Lemma \ref{estimates-error}, one has
		\begin{equation*}
			\left\{ \begin{aligned}
				&\beta_1\int_{\mathbb{R}^n} u^rv^{2p}\varphi\leq \gamma_1\int_{\mathbb{R}^n} u^{r+q+1}v^{p-1}\varphi+\varepsilon\int_{\mathbb{R}^n} u^{2q}v^s\varphi+C,\\
				&\beta_2\int_{\mathbb{R}^n} u^{2q}v^s\varphi\leq \gamma_2\int_{\mathbb{R}^n} u^{q-1}v^{p+s+1}\varphi+\varepsilon\int_{\mathbb{R}^n} u^{2q}v^s\varphi+C,
			\end{aligned} \right.
		\end{equation*}
		
		Then, one has
		\begin{align*}
			\beta_1\int_{\mathbb{R}^n} u^rv^{2p}\varphi\leq & \gamma_1\int_{\mathbb{R}^n} u^{r+q+1}v^{p-1}\varphi+\varepsilon\int_{\mathbb{R}^n} u^{2q}v^s\varphi+C\\
			\leq &\gamma_1\int_{\mathbb{R}^n} u^{r+q+1}v^{p-1}\varphi+\frac{\varepsilon\gamma_2}{\beta_2-\varepsilon}\int_{\mathbb{R}^n}u^{q-1}v^{p+s+1}\varphi+C.
		\end{align*}
		
		Using Lemma \ref{component-comparison} and \eqref{rs-range}, one has
		$$u^{q-1}v^{p+s+1}\leq C_1(n,d_1, d_2)u^{r+q+1}v^{p-1},$$
		where $C_1(n,d_1,d_2)$ is a positive constant depending on $n,d_1,d_2$.
		
		Then, we obtain that
		\begin{equation}\label{beta_2u^2qv^s}
			(\beta_2-\varepsilon)\int_{\mathbb{R}^n}u^{2q}v^s\varphi\leq \gamma_2\int_{\mathbb{R}^n}u^{q-1}v^{p+s+1}\varphi+C
		\end{equation}
		and
		\begin{equation}\label{beta_1u^rv^2p}
			\beta_1\int_{\mathbb{R}^n}u^rv^{2p}\varphi\leq \left(\gamma_1+\frac{\varepsilon\gamma_2C_1(n,d_1,d_2)}{\beta_2-\varepsilon}\right)\int_{\mathbb{R}^n}u^{r+q+1}v^{p-1}\varphi +C.
		\end{equation}
		
		With the help of H\"older's inequality, one has
		\begin{equation}\label{Holder's estimte}
			\int_{\mathbb{R}^n}u^{r+q+1}v^{p-1}\varphi\int_{\mathbb{R}^n}u^{q-1}v^{p+s+1}\leq \int_{\mathbb{R}^n}u^rv^{2p}\varphi\int_{\mathbb{R}^n}u^{2q}v^s\varphi.
		\end{equation}
		Since $\beta_1\beta_2>\gamma_1\gamma_2$, we can choose $\varepsilon$ small enough such that
		$$\beta_1\left(\beta_2-\varepsilon\right)>\left(\gamma_1+\frac{\varepsilon\gamma_2C_1(n,d_1,d_2)}{\beta_2-\varepsilon}\right)\gamma_2.$$
		
		With the help of \eqref{Holder's estimte}, multiplying \eqref{beta_2u^2qv^s} with \eqref{beta_1u^rv^2p}, we can obtain that
		\begin{equation}\label{u^sv^2pu^2qv^s}
			\int_{\mathbb{R}^n}u^{2q}v^s\varphi\int_{\mathbb{R}^n}u^rv^{2p}\varphi\leq C\int_{\mathbb{R}^n}u^{r+q+1}v^{p-1}\varphi+C\int_{\mathbb{R}^n}u^{q-1}v^{p+s+1}\varphi+C.
		\end{equation}
		
		Using Lemma \ref{component-comparison} and \eqref{rs-range} again, one has
		$$u^{r+q+1}v^{p-1}\leq  Cu^{2q}v^s,\quad u^{q-1}v^{p+s+1}\leq Cu^{2q}v^s, \quad u^{r}v^{2p}\leq Cu^{2q}v^s.$$
		Applying the above estimate into \eqref{u^sv^2pu^2qv^s}, one has
		$$\int_{\mathbb{R}^n}u^{2q}v^s\varphi\int_{\mathbb{R}^n}u^rv^{2p}\varphi\leq C\int_{\mathbb{R}^n}u^{2q}v^s\varphi+ C$$
		which yields that
		$$\int_{\mathbb{R}^n}u^rv^{2p}\varphi\leq C+ \frac{C}{\int_{\mathbb{R}^n}u^{2q}v^s\varphi}\leq C+\frac{C}{\int_{\mathbb{R}^n}u^rv^{2p}\varphi}.$$
		Thus, one obtain that
		\begin{equation*}
			\int_{\mathbb{R}^n} u^{r}v^{2p}\varphi\leq C
		\end{equation*}
		where $C$ is a positive constant which is independent of $u$ and $v$. Hence replacing the solution $(u(x),v(x))$ by $(R^\alpha u(Rx),R^\beta v(Rx))$ and changing variables, we have
		\begin{equation}\label{universalestimats2}
			\int_{B_R} u^{r}v^{2p}\leq CR^{\frac{-(d_1+3d_2)-(2-d_1-d_2)n}{d_1+d_2}}.
		\end{equation}
		Letting $R\rightarrow\infty$ in \eqref{universalestimats2}, we obtain a contradiction due to \eqref{d1d2range}.
	\end{proof}

	\section{Numerical estimate and proof of main theorem}\label{sec:numercial}
	In this section,we are going to do some numerical estimate with the help of Mathematica.
	\begin{lemma}\label{lem:c_0=4}
		Under the same assumption in Lemma \ref{lem:crucial estimate} and choose $c_0=4$. Then, for $n\geq 13$,
		the  positivity  condition
		\eqref{beta1beta2>gamma1gamma2} holds.
	\end{lemma}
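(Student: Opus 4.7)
The plan is to invoke as much of the proof of Lemma \ref{lem:crucial estimate} as possible and then reduce the remaining statement to an explicit numerical verification for $c_0 = 4$ and $n \geq 13$. The key observation is that the positivity of $\alpha_1, \alpha_2, \gamma_1, \gamma_2, \beta_1, \beta_2$ was already established in the proof of Lemma \ref{lem:crucial estimate} for $n \geq 13$ using only the weaker constraint $0 \leq d_2 < d_1 < 2 - d_2$, with no dependence on the value of $c_0$. So the only item left to verify is the strict inequality $\beta_1 \beta_2 > \gamma_1 \gamma_2$, equivalently $A_1 A_2 > pq$, on the region $0 \leq d_2 < d_1 \leq 2 - d_2 - 4/n$ for $n \geq 13$.

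I would split as in the earlier proof into Case (1) $d_1 \leq 1/2$ and Case (2) $d_1 > 1/2$. For Case (1) the argument in Lemma \ref{lem:crucial estimate} already yielded $A_1 > p$ and $A_2 > q$ for $n \geq 13$ directly, so nothing further is required. Case (2) is the substantive one. Here I would exploit the explicit expansion recorded in the proof of Lemma \ref{lem:crucial estimate},
$$A_1 A_2 - pq = \frac{4(2 - d_1 - d_2)(d_1 + 3d_2)}{n^2} + \frac{P(d_1,d_2)}{n^3} + \frac{\sum_{i=0}^{17} Q_i(d_1,d_2)\, n^i}{2(n-1)^4 n^3 (n-2d_1)^2(n+2d_1)(n-2d_2)^2(n+2d_2)\, A_3 A_4}.$$
Using $2 - d_1 - d_2 \geq 4/n$ together with the Mathematica-verified inequality $P(d_1, d_2) + 8(d_1 + 3d_2) \geq 0$ on $0 \leq d_2 < d_1 < 2 - d_2$, the first two terms combine into a positive contribution of at least $8(d_1 + 3d_2)/n^3$, which for $d_1 > 1/2$ is bounded below by $4/n^3$. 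The remaining rational expression is of order $1/n^4$ with a universal constant that can be made explicit on the compact box $0 \leq d_2 \leq d_1 \leq 2$.

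The main obstacle is to push the numerical slack down to the precise threshold $n = 13$. My plan is to clear denominators so that the target inequality becomes a single polynomial inequality $\Phi(n, d_1, d_2) > 0$, then verify it on the semi-algebraic region $\{d_2 \geq 0,\, d_1 \geq 1/2,\, n(d_1 + d_2) + 4 \leq 2n\}$ separately for each small $n \in \{13, 14, \dots, N_0\}$ using Mathematica's \textsf{Resolve} or \textsf{Minimize}, and for $n > N_0$ via the asymptotic dominance of the $1/n^3$ main term over the $1/n^4$ remainder. A moderate choice of $N_0$ should suffice; the only genuine risk is that a configuration near the boundary $d_1 + d_2 = 2 - 4/n$ at $n = 13$ fails the crude bound $d_1 > 1/2$, in which case I would refine by tracking the $d_2$-dependence in $P + 8(d_1 + 3d_2)$ or by sharpening the universal bound on $\sum Q_i(d_1,d_2)\, n^i$ rather than using worst-case absolute values.
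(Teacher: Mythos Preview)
Your proposal is correct and follows essentially the same route as the paper: reduce to $A_1A_2>pq$ using the $n\geq 13$ positivity already obtained in Lemma~\ref{lem:crucial estimate}, handle $d_1\leq 1/2$ by the earlier argument, and for $d_1>1/2$ combine the leading term with the bound $P+8(d_1+3d_2)\geq 0$ (yielding a positive $O(n^{-3})$ contribution) against an $O(n^{-4})$ remainder, then check the remaining finitely many $n$ by Mathematica. The only cosmetic difference is that the paper re-expands $A_1A_2-pq$ in powers of $(n-1)$ rather than $n$, obtaining a polynomial $T(d_1,d_2)$ (satisfying the same bound $T\geq -8(d_1+3d_2)$) and a remainder with numerator of degree $14$ instead of $17$; it then gives explicit lower bounds on each coefficient $R_i$ to push the asymptotic argument down to $n\geq 35$ and verifies $13\leq n\leq 34$ directly, exactly as you propose with your unspecified $N_0$.
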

	\begin{proof}
		From the argument in Lemma \ref{lem:crucial estimate}, we only need to check that, for $n\geq 13$,
		$$A_1A_2-pq>0.$$
		To prove this estimate, we have the following  decomposition by using Mathematica:
		\begin{align*}
			&A_1A_2-pq\\
			=&\frac{4(2-d_1-d_2)(d_1+3d_2)}{(n-1)^2}+\frac{T(d_1,d_2)}{(n-1)^3}\\
			&+\frac{\sum^{14}_{i=0}R_i(d_1,d_2)n^{i}}{2(n-2d_1)^2(n-2d_2)^2(n-1)^4(n+2d_1)(n+2d_2)A_3A_4}
		\end{align*}
		where $A_3$ and  $A_4$ come from \eqref{A3def} and \eqref{A4def},
		\begin{align*}
			T(d_1,d_2)=&\frac{d_1^2}{2}(19d_1-76-125d_2)\\
			&+\frac{d_1}{2}(60+296d_2-127d_2^2)+\frac{d_2}{2}(-76+36d_2-23d_2^2),
		\end{align*}
		and $R_{i}(d_1,d_2)$ are some polynomials 	in $d_1,d_2$ with integer coefficients.
		
		With the help of Mathematica, we are able to establish lower bounds for the coefficients of the powers of $n$, under the constraint $0\leq d_2<d_1<2-d_2$. Then, for $n\geq 2$,
		one  has
		\begin{equation}\label{A_3estimate}
			A_3\geq 4n^4-16n^3-60n^2-n
		\end{equation}
		and
		\begin{equation}\label{A_4estimate}
			A_4\geq 4n^4-16n^3-16n^2-50n-12.
		\end{equation}
		
		Under the constraint $0\leq d_2<d_1<2-d_2$, one has
		$$T(d_1,d_2)\geq -8(d_1+3d_2)$$
		by using Mathematica.
		
		The precise expressions for $R_i(d_1,d_2)$ can be obtained using Mathematica, but they are too lengthy to present here. For our purposes, it suffices to provide the lower bounds computed with Mathematica, as shown below.
		Under the constraint
		\begin{equation}\label{constriantfor R_14-10}
			0\leq d_2<d_1<2-d_2,
		\end{equation}
		one has
		\begin{align*}
			\frac{R_{14}(d_1,d_2)}{32}\geq&-31(4d_1+12d_2)\\
			\frac{R_{13}(d_1,d_2)}{32}\geq&-40(4d_1+12d_2)\\
			\frac{R_{12}(d_1,d_2)}{32}\geq&-803(4d_1+12d_2)\\
			\frac{R_{11}(d_1,d_2)}{32}\geq&-1982(4d_1+12d_2)\\
			\frac{R_{10}(d_1,d_2)}{32}\geq&-200(4d_1+12d_2).
		\end{align*}
		As for the remaining terms, a precise computation under constraint \eqref{constriantfor R_14-10} is prohibitively time-consuming. Moreover, these terms are of lower order for sufficiently large $n$. We therefore confine ourselves to a rough lower bound valid over a larger domain:
		\begin{equation*}
			0 \leq d_2 < 1, \quad 1/2 < d_1 < 2.
		\end{equation*}
		The results are as follows:
		\begin{align*}
			\frac{R_{9}(d_1,d_2)}{32}\geq&-6\times 10^4(4d_1+12d_2)\\
			\frac{R_{8}(d_1,d_2)}{32}\geq&-1.5\times 10^5(4d_1+12d_2)\\
			\frac{R_{7}(d_1,d_2)}{32}\geq&-1600(4d_1+12d_2)\\
			\frac{R_{6}(d_1,d_2)}{32}\geq&-8\times 10^5(4d_1+12d_2)\\
			\frac{R_{5}(d_1,d_2)}{32}\geq&-9\times 10^5(4d_1+12d_2)\\
			\frac{R_{4}(d_1,d_2)}{32}\geq&-8\times 10^4(4d_1+12d_2)\\
			\frac{R_{3}(d_1,d_2)}{32}\geq&-3\times 10^6(4d_1+12d_2)\\
			\frac{R_{2}(d_1,d_2)}{32}\geq&-1.3\times 10^5(4d_1+12d_2)\\
			\frac{R_{1}(d_1,d_2)}{32}\geq&-40(4d_1+12d_2)\\
			\frac{R_{0}(d_1,d_2)}{32}\geq&-3(4d_1+12d_2)
		\end{align*}
		For $n\geq 13$, combining with \eqref{A_3estimate} with \eqref{A_4estimate},  one has
		\begin{align*}
			&A_1A_2-pq\\
			\geq&\frac{4(d_1+3d_2)}{(n-1)^4}\left(\frac{4(n-1)^2}{n}-2(n-1)\right)\\
			&-\frac{4(d_1+3d_2)}{(n-1)^4}\cdot\frac{32(31n^{14}+40n^{13}+803n^{12}+1982n^{11}+200n^{10}+4\times 10^7n^9)}{2(n-4)^2(n-2)^2n^2(4n^4-8n^3-37n^2-n)(4n^4-8n^3-16n^2-13n-6)}.
		\end{align*}
		Using Mathematica,  for $n\geq 35$,  one has
		$$\frac{4(n-1)^2}{n}-2(n-1)-\frac{16(31n^{14}+40n^{13}+803n^{12}+1982n^{11}+200n^{10}+4\times 10^7n^9)}{(n-4)^2(n-2)^2n^2(4n^4-16n^3-60n^2-n)( 4n^4-16n^3-16n^2-50n-12)}>0.$$
		Thus,   for $c_0=4$ and $n\geq 35$, one has
		$$A_1A_2-pq>0.$$
		
		For each integer  with $13\leq n\leq 35$, we use Mathematica  to  directly verify  that
		$$A_1A_2-pq>0$$
		under the constraints $$0\leq d_2<1,\quad \frac{1}{2}<d_1<2-d_2-\frac{4}{n}.$$
		
		Combining with Lemma \ref{lem:crucial estimate}, the positivity condition \eqref{beta1beta2>gamma1gamma2} holds.
	\end{proof}
	
	As for the case $5\leq n\leq 12$, we need modify the choice of $k_1,k_2$ in \eqref{rs k1 k2choice}.
	Firstly,  for $5\leq n\leq 12$,under the constraints
	$$0\leq d_2<d_1<2-d_2-\frac{4}{n}$$
	it is hard to check that
	$$-\frac{n}{n-2}<-\frac{3(d_1-d_2)}{n-2d_2}<0,\quad 0<\frac{d_1-d_2}{n-2d_1}<1.$$
	Then, we choose
	\begin{equation}\label{rs k1 k2choice2}
		\left\{
		\begin{aligned}
			r=&-\frac{3(d_1-d_2)}{n-2d_2}, \quad s=\frac{d_1-d_2}{n-2d_1}\\
			k_1=&\frac{n}{2(n-1)}\left(r-1-\sqrt{\frac{(1-r)((n-2)r+n)}{n}}\right)+\epsilon_0\\
			k_2=&\frac{n}{2(n-1)}\left(s-1-\sqrt{\frac{(1-s)((n-2)s+n)}{n}}\right)+\epsilon_0\\
			\theta_1=&\frac{\rho_1-r}{p},\quad 	\theta_2=\frac{\rho_2-s}{q}
		\end{aligned}\right.
	\end{equation}
	where  $\epsilon_0$ is positive and small enough, and
	$$\rho_1=\frac{n}{n-1}\left(\frac{3r}{2}-\frac{(n+2)k_1}{n}\right),\quad \rho_2=\frac{n}{n-1}\left(\frac{3s}{2}-\frac{(n+2)k_2}{n}\right).$$
	
	\begin{lemma}\label{lem:5leq nleq 12}
		Choose \eqref{rs k1 k2choice2} in Theorem \ref{estimate-theorem}. For each integer $n$ with $5\leq n\leq 12$, the positivity condition \eqref{beta1beta2>gamma1gamma2} holds.
	\end{lemma}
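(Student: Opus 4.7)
The plan is to exploit continuity in the perturbation parameter $\epsilon_0$ and reduce the claim, for each fixed $n\in\{5,\dots,12\}$, to a finite semi-algebraic verification in $(d_1,d_2)$ that can be discharged in Mathematica. First I would observe that, with $\epsilon_0=0$, the prescribed $k_1$ in \eqref{rs k1 k2choice2} is exactly the smaller root of the downward-opening quadratic $\alpha_1(k_1)=-\tfrac{n-1}{n}k_1^2+(r-1)k_1-\tfrac{1}{2}r(r-1)$: completing the square shows that the discriminant equals $(1-r)((n-2)r+n)/n$, which is strictly positive under the bounds $-\tfrac{n}{n-2}<r<0$ and $0<s<1$ established in the paragraph preceding the lemma. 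Hence $\alpha_1\big|_{\epsilon_0=0}=0$, and since $\partial_{k_1}\alpha_1$ evaluated at this root equals $+\sqrt{(1-r)((n-2)r+n)/n}>0$, a small $\epsilon_0>0$ pushes $k_1$ into the open interval between the two roots, producing $\alpha_1>0$. The same argument gives $\alpha_2>0$.

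Second, the choice $\theta_1=(\rho_1-r)/p$ mimics the high-dimensional choice $\theta_1=(\mu(r)-r)/p$ from \eqref{rs k1 k2choice}, with $\mu(r)$ replaced by $\rho_1$. Repeating verbatim the algebra leading to \eqref{delata1,delata2} gives
\begin{equation*}
\delta_1=\frac{p-1}{p}\rho_1^2+\frac{r^2}{p}+r,\qquad \delta_2=\frac{q-1}{q}\rho_2^2+\frac{s^2}{q}+s,
\end{equation*}
so that each of $\beta_1,\beta_2,\gamma_1,\gamma_2$ becomes an explicit rational function of $(n,d_1,d_2,\epsilon_0)$, and $\beta_i/\gamma_i$ admits the same compact form as the $A_i$ in \eqref{A1A2repre}.

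Third, for each $n\in\{5,6,\dots,12\}$ I would evaluate these rational expressions at $\epsilon_0=0$ on the compact region
\begin{equation*}
\Omega_n=\{(d_1,d_2):\ 0\leq d_2<d_1,\ d_1+d_2\leq 2-\tfrac{4}{n}\},
\end{equation*}
and verify in Mathematica that $\beta_1,\beta_2,\gamma_1,\gamma_2>0$ and $\beta_1\beta_2-\gamma_1\gamma_2>0$ on $\Omega_n$. Each such two-dimensional semi-algebraic inequality is decidable by \texttt{Reduce}, or alternatively by a fine grid combined with interval arithmetic; the finite number of dimensions makes this practical. Once the strict inequalities are established at $\epsilon_0=0$, continuity in $\epsilon_0$ on the compact set $\Omega_n$ yields an $\epsilon_0(n)>0$ for which the perturbed system still satisfies them, while the first step provides $\alpha_1,\alpha_2>0$. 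Combining these gives \eqref{beta1beta2>gamma1gamma2}.

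The main obstacle is the absence, for these small values of $n$, of the clean asymptotic structure exploited in Lemmas \ref{lem:crucial estimate} and \ref{lem:c_0=4}: the sign of $\beta_1\beta_2-\gamma_1\gamma_2$ depends sensitively on $n$, so one cannot hope for a uniform polynomial-bound argument and must instead rely on a computer-assisted case analysis, one dimension at a time. A secondary subtlety is the behaviour near the diagonal $d_1=d_2$, where $r,s\to 0$ and $\delta_1,\delta_2$ threaten to degenerate, and along the boundary curve $d_1+d_2=2-\tfrac{4}{n}$: the Mathematica verification must confirm that the positivity margins remain bounded away from zero uniformly on $\Omega_n$, so that a single $\epsilon_0(n)>0$ suffices.
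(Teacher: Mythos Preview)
Your approach is correct and essentially identical to the paper's: both set $\epsilon_0=0$ so that $k_1,k_2$ land exactly on the boundary $\alpha_1=\alpha_2=0$, compute $\delta_i=\tfrac{r^2}{p}+r+\tfrac{p-1}{p}\rho_1^2$ (resp.\ $s,q,\rho_2$), verify by Mathematica for each $n\in\{5,\dots,12\}$ that the remaining strict inequalities hold at $\epsilon_0=0$ (the paper phrases this as $A_1,A_2>0$ and $A_1A_2-pq>0$, which is equivalent to your $\beta_i,\gamma_i>0$ and $\beta_1\beta_2>\gamma_1\gamma_2$), and then appeal to continuity in $\epsilon_0$. One simplification: since in the application $\epsilon_0$ may depend on the fixed pair $(d_1,d_2)$, pointwise continuity suffices and your concern about a uniform $\epsilon_0(n)$ over $\Omega_n$---and hence about degeneration near the diagonal---is unnecessary.
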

	\begin{proof}
		Firstly, we set \(\epsilon_0 = 0\). In this case, although \(\alpha_1 = \alpha_2 = 0\), we shall show that
		\begin{equation}\label{positive condition1}
			\gamma_1 > 0, \quad \gamma_2 > 0, \quad \beta_1 > 0, \quad \beta_2 > 0, \quad \beta_1\beta_2 - \gamma_1\gamma_2 > 0.
		\end{equation}
		Since these parameters depend continuously on \(\epsilon_0\), we may choose a sufficiently small \(\epsilon_0 > 0\) such that
		\[
		\alpha_1 > 0 \quad \text{and} \quad \alpha_2 > 0,
		\]
		while still maintaining the inequalities in \eqref{positive condition1}.
		
		Hence, we only need to focus on the case \(\epsilon_0 = 0\).
		By our choice in \eqref{rs k1 k2choice2}, it is straightforward to verify that
		\[
		\rho_1 > 0, \quad \rho_2 > 0.
		\]
		Substituting the expressions for \(\theta_1\) and \(\theta_2\) from \eqref{rs k1 k2choice2} into \eqref{delta1} and \eqref{delta2}, we obtain
		\[
		\delta_1 = \frac{r^2}{p} + r + \frac{p-1}{p} \rho_1^2, \quad
		\delta_2 = \frac{s^2}{q} + s + \frac{q-1}{q} \rho_2^2.
		\]
		Clearly, \(\delta_2 > 0\). Moreover, for \(5 \leq n \leq 12\), we use Mathematica to verify that
		\[
		\delta_1 > 0
		\]
		under the constraint \(0 \leq d_2 < d_1 < 2 - d_2 - \frac{4}{n}\).
		Therefore, we conclude that
		\[
		\gamma_1 > 0, \quad \gamma_2 > 0.
		\]
		Again with the aid of Mathematica, we confirm that
		\[
		A_1 > 0, \quad A_2 > 0, \quad A_1 A_2 - p q > 0
		\]
		under the same constraint \(0 \leq d_2 < d_1 < 2 - d_2 - \frac{4}{n}\).
		This completes the proof of the lemma.
	\end{proof}

	\quad
	
	\quad
	
	{\bf Proof of Theorem \ref{mainthm}:}
	
	Following the same  argument of Theorem \ref{mainthm for c_0}, we finish the proof by using Lemma \ref{lem:c_0=4} and Lemma \ref{lem:5leq nleq 12}.

	\section*{Acknowledgements}
	This work was partially carried out during the first author's visit to The Chinese University of Hong Kong and the second author's visit to Zhengzhou University, whose hospitality they wish to acknowledge.  J.C. Wei is supported by National Key
	R$\&$D Program of China 2022YFA1005602, and Hong Kong General Research Funds “New frontiers in singularity formations of nonlinear partial differential equations" and “On Fujita equation in critical and supercritical regime".

	\noindent

\end{document}